\UseRawInputEncoding
\documentclass[final,12pt]{elsarticle}
\usepackage[utf8]{inputenc}
\usepackage{ragged2e}
\justifying
\usepackage{bm}
\usepackage{geometry}
\usepackage{amsmath}
\usepackage{mathrsfs}
\usepackage{amsthm}
\usepackage{amssymb}
\newtheorem{theorem}{Theorem}[section]
\newtheorem{corollary}{Corollary}[section]
\newtheorem{lemma}{Lemma}[section]
\newtheorem{definition}{Definition}[section]

\newtheorem{proposition}{Proposition}[section]

\newenvironment{Proof of Theorem 1.4 (ii).}{\noindent{\textbf{Proof of Theorem 1.4 (ii).}}\ }{\hfill $\square$\par}

\numberwithin{equation}{section}

\geometry{left=2.5cm,right=2.5cm,top=2cm,bottom=2cm}

\usepackage{indentfirst}
\setlength{\parindent}{2em}

\begin{document}
	\begin{frontmatter}  
		\title{Tur\'{a}n number for odd-ballooning of bipartite graphs\,\tnoteref{titlenote}}  
		\tnotetext[titlenote]{This work was supported by the National Natural Science Foundation  of China (Nos. 11871040, 12271337).}    
		\author{Yanni Zhai}  
		\author{Xiying Yuan\corref{correspondingauthor}}
		\cortext[correspondingauthor]{Corresponding author. \\
			Email address: yannizhai2022@163.com (Yanni Zhai), xiyingyuan@shu.edu.cn (Xiying Yuan).}   
		\address{Department of Mathematics, Shanghai University, Shanghai 200444, P.R. China}  
		\begin{abstract}  
		Given a graph $H$ and an odd integer $t$ ($t\geq 3$),
		the odd-ballooning of $H$, denoted by $H(t)$, is the graph obtained from replacing each edge of $H$ by an odd cycle of length at least $t$ where the new vertices of the cycles are all distinct. In this paper, we determine the range of Tur\'{a}n numbers for odd-ballooning of  bipartite graphs when $t\geq 5$. As applications, we may deduce the Tur\'{a}n numbers for odd-ballooning of stars, paths and even cycles.
		\end{abstract}   
		\begin{keyword}  
			\emph{Extremal graphs \sep Tur\'an number \sep Odd-ballooning\sep  Bipartite graph} 
			\end{keyword}
	\end{frontmatter}
	
	\section{Introduction}
	In this paper, we consider simple graphs without loops and multiedges.  The order of a graph $H=\left(V(H),\,E(H)\right)$ is the number of its vertices denoted by $\nu(H)$, and the size of a graph $H$ is the number of its edges denoted by $e(H)$. For a vertex $v\in V(H)$, the neighborhood of $v$ in $H$ is denoted by $N_{H}(v)=\{u\in V(H):uv\in E(H)\}$. Let $N_H[v]=\{v\}\cup N_{H}(v)$. The degree of the vertex $v$ is written as $d_{H}(v)$ or simply $d(v)$. $\Delta (H)$ is the maximum degree of $H$ and $\delta (H)$ is the minimum degree of $H$. Usually, a path of order $n$ is denoted by $P_n$, a cycle of order $n$ is denoted by $C_n$.  A star of order $n+1$ is denoted by $S_n$ ($n\geq 2$), and the vertex of degree larger than one is called the center vertex.
	The maximum number of edges in a matching of $H$ is called the matching number of $H$ and denoted  by $\alpha'(H)$. 
	For $U\subseteq V(H)$, let $H[U]$ be the subgraph of $H$ induced by $U$, $H-U$ be the graph obtained by deleting all the vertices in $U$ and their incident edges. 
	
	Given two graphs $G$ and $H$, the union of graphs $G$ and $H$ is the graph $G\cup H$ with vertex set $V(G)\cup V(H)$ and edge set $E(G)\cup E(H)$. The union of $k$ copies of $P_2$ is denoted by $kP_2$.
	The join of $G$ and $H$, denoted by $G\vee H$, is the graph obtained from $G\cup H$  by adding all edges between $V(G)$ and $V(H)$. The graph $K_p(i_1,i_2,\cdots,i_p)$ denotes the complete $p$-partite graph with parts of order $i_1,\,i_2,\,\cdots,\,i_p$. Denoted by $T_p(n)$, the $p$-partite Tur\'{a}n graph is the complete $p$-partite graph on $n$ vertices with the order of each partite set as equal as possible. 
	
		Given a family of graphs $\mathcal{L}$, a graph $H$ is $\mathcal{L}$-free if it does not contain any  graph $L\in \mathcal{L}$ as a subgraph. The Tur\'{a}n number, denoted by {\rm {ex}}$(n,\,\mathcal{L})$, is the maximum number of edges in a graph of order $n$ that is $\mathcal{L}$-free.
	The set of $\mathcal{L}$-free graphs of order $n$ with {\rm {ex}}$(n,\,\mathcal{L})$ edges is denoted by {\rm {EX}}$(n,\,\mathcal{L})$ and call a graph in {\rm {EX}}$(n,\,\mathcal{L})$ an extremal graph for $\mathcal{L}$.
	In 1966, Erd\H{o}s and Simonovits \cite{ref66} proved a classic theorem showing that the Tur\'{a}n number of a graph is closely related to the chromatic number. The chromatic number of $H$ is denoted by $\chi (H)$. 
	For a family of graphs $\mathcal{L}$, the subchromatic number of $\mathcal{L}$ is defined by $p(\mathcal{L})=$min$\{\chi (L):\,L\in \mathcal{L}\}-1$.

	\begin{theorem} [{Erd\H{o}s and Simonovits~\cite[]{ref66}}]\label{a}
		Given a family of graphs  $\mathcal{L}$, $p=p(\mathcal{L})$, if $p>0$, then 
		\begin{equation*}
			{\rm {ex}}(n,\,\mathcal{L})= \left(1-\frac{1}{p}\right)\binom{n}{2}+o(n^2).
		\end{equation*}
	\end{theorem}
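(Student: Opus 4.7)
The plan is to prove the theorem by establishing matching lower and upper bounds that differ by $o(n^2)$. For the lower bound, I would take as an extremal construction the Tur\'{a}n graph $T_p(n)$. Since $\chi(T_p(n)) = p$ while every $L \in \mathcal{L}$ satisfies $\chi(L) \geq p+1$ by the very definition of the subchromatic number $p(\mathcal{L})$, no member of $\mathcal{L}$ can embed into $T_p(n)$; hence $T_p(n)$ is $\mathcal{L}$-free. A direct edge count yields $e(T_p(n)) = (1-1/p)\binom{n}{2} - O(1)$, which gives ${\rm ex}(n,\,\mathcal{L}) \geq (1-1/p)\binom{n}{2} - O(1)$, and the $-O(1)$ error is absorbed into the $o(n^2)$ term.

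For the upper bound, the key ingredient would be the Erd\H{o}s--Stone theorem, which asserts that for fixed $p \geq 1$ and $t \geq 1$,
\[
{\rm ex}(n,\,K_{p+1}(t,t,\ldots,t)) = \left(1-\frac{1}{p}\right)\binom{n}{2} + o(n^2).
\]
I would then pick any $L_0 \in \mathcal{L}$ achieving $\chi(L_0) = p+1$, set $t = \nu(L_0)$, and observe that $L_0$ embeds into the balanced complete $(p+1)$-partite graph $K_{p+1}(t,\ldots,t)$ via a proper $(p+1)$-colouring of $L_0$ whose colour classes are padded up to size $t$. Consequently, any $\mathcal{L}$-free graph on $n$ vertices is in particular $L_0$-free and therefore $K_{p+1}(t,\ldots,t)$-free, so the Erd\H{o}s--Stone bound yields ${\rm ex}(n,\,\mathcal{L}) \leq {\rm ex}(n,\,K_{p+1}(t,\ldots,t)) = (1-1/p)\binom{n}{2} + o(n^2)$.

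The main obstacle is of course the Erd\H{o}s--Stone theorem itself, which I would quote as a black box rather than reprove; its proof requires a supersaturation step together with an iterative extraction of a large complete multipartite subgraph, and this is where the substantive work lives. Once Erd\H{o}s--Stone is granted, the two bounds above sandwich ${\rm ex}(n,\,\mathcal{L})$ between $(1-1/p)\binom{n}{2} - O(1)$ and $(1-1/p)\binom{n}{2} + o(n^2)$, completing the argument for Theorem~\ref{a}.
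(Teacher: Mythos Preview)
Your proposal is correct and is in fact the standard derivation of the Erd\H{o}s--Simonovits theorem from the Erd\H{o}s--Stone theorem. Note, however, that the paper does not supply its own proof of Theorem~\ref{a}: it is quoted as a classical result from~\cite{ref66} and used as a black box, so there is no in-paper argument to compare your approach against.
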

Erd\H{o}s and Stone \cite{ref46} proved the following theorem, which shows that if the size of a graph satisfies some conditions, it  contains a Tur\'{a}n graph as a subgraph.
	\begin{theorem}[{Erd\H{o}s and Stone~\cite[]{ref46}}]\label{b}
	For all integers $p\geq 2$ and $N\geq 1$, and every $\epsilon >0$, there exists an integer $n_1$ such that every graph with $n>n_1$ vertices and at least $e(T_{p-1}(n))+\epsilon n^2$ edges contains $T_{p}(pN)$ as a subgraph.
	\end{theorem}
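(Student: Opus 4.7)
The plan is to proceed by induction on $p$.

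For the base case $p=2$, since $e(T_1(n))=0$, one must show that every graph on $n$ vertices with at least $\epsilon n^2$ edges contains $T_2(2N)=K_{N,N}$ once $n$ is sufficiently large. This is a classical consequence of the K\H{o}v\'ari--S\'os--Tur\'an theorem, which gives $\mathrm{ex}(n,K_{N,N})=O(n^{2-1/N})=o(n^2)$.

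For the inductive step, I would assume the theorem holds for $p-1$ and take $G$ to be a graph on $n$ vertices with $e(T_{p-1}(n))+\epsilon n^2$ edges. Since $e(T_{p-1}(n))\geq e(T_{p-2}(n))$, the graph $G$ in particular has at least $e(T_{p-2}(n))+\epsilon n^2$ edges. Picking an auxiliary integer $M=M(N,p,\epsilon)$ sufficiently large (to be determined), the induction hypothesis applied with $M$ in place of $N$ produces a copy of $T_{p-1}((p-1)M)$ in $G$, provided $n$ is large enough. Denote its parts by $A_1,\ldots,A_{p-1}$, each of size $M$, and let $W=V(G)\setminus\bigcup_i A_i$.

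The core of the argument is then to extend this $T_{p-1}((p-1)M)$ to a $T_p(pN)$ by adjoining $N$ vertices of $W$ whose neighborhoods inside each $A_i$ share a common $N$-set. A double-counting argument, exploiting that $G$ has $\epsilon n^2$ more edges than $T_{p-1}(n)$, shows that a positive proportion of the vertices of $W$ have at least $\alpha M$ neighbors in each $A_i$ for some $\alpha=\alpha(\epsilon,p)>0$. Choosing $M$ so large that $\alpha M\geq N$ and the number of such ``rich'' vertices exceeds $N\binom{M}{N}^{p-1}$, the pigeonhole principle (over all $(p-1)$-tuples of $N$-subsets, one in each $A_i$) supplies $N$ vertices $v_1,\ldots,v_N$ sharing the same $N$-subsets $S_i\subseteq A_i$ of neighbors in every $A_i$. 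Then $S_1,\ldots,S_{p-1},\{v_1,\ldots,v_N\}$ form the $p$ parts of the sought $T_p(pN)$.

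The principal obstacle is calibrating the constants: one must verify that the excess $\epsilon n^2$ of edges really does yield a positive-proportion subset of $W$ with large degree into each $A_i$ (so rich vertices cannot all be lost by deleting a few ``bad'' layers), and that $M$ can be taken large enough to absorb the pigeonhole loss of $\binom{M}{N}^{p-1}$ while remaining consistent with the $n>n_1$ threshold demanded at each level of the induction.
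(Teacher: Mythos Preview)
The paper does not supply a proof of this statement: Theorem~\ref{b} is quoted as the classical Erd\H{o}s--Stone theorem with a citation to~\cite{ref46} and is used only as a black box in the proof of Theorem~\ref{d}. So there is no ``paper's own proof'' to compare against; your task was effectively to reproduce the classical argument.

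Your outline follows the standard inductive scheme and is correct in its overall architecture, but the step ``a double-counting argument, exploiting that $G$ has $\epsilon n^2$ more edges than $T_{p-1}(n)$, shows that a positive proportion of the vertices of $W$ have at least $\alpha M$ neighbors in each $A_i$'' is a genuine gap as written. The global excess of $\epsilon n^2$ edges says nothing about degrees into the \emph{particular} copy of $T_{p-1}((p-1)M)$ you have located: those $(p-1)M$ vertices are a vanishingly small set, and the surplus edges could lie entirely elsewhere in $G$. The usual repair is to first pass to a subgraph $G'$ on $n'\geq \eta n$ vertices with minimum degree at least $\bigl(1-\tfrac{1}{p-1}+\tfrac{\epsilon}{2}\bigr)n'$, obtained by iteratively deleting vertices of low degree (the edge count guarantees this process terminates with a linear-size remnant). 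Inside $G'$ the induction hypothesis still applies, and now \emph{every} vertex of $W$ has at least $\bigl(1-\tfrac{1}{p-1}+\tfrac{\epsilon}{2}\bigr)n'-(n'-(p-1)M)$ neighbors among the $A_i$, which forces at least $\tfrac{\epsilon}{2}M$ neighbors in each $A_i$ after a short averaging. With this modification your pigeonhole finish goes through exactly as you describe.
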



	 In 2003, Chen, Gould and Pfender \cite{ref03} determined the Tur\'{a}n numbers for $F_{k,r}$, a graph consists of $k$ complete graphs of order $r$ which intersect in exactly one common vertex. 
	In 2016, Hou, Qiu and  Liu \cite{ref16} determined the Tur\'{a}n numbers for intersecting odd cycles with the same length. Later, Hou, Qiu and Liu \cite{ref18} considered the Tur\'{a}n numbers for $H_{s,\,t}$, a graph consists of $s$ triangles and $t$ odd cycles with length at least 5 which intersect in exactly one common vertex. For an odd integer $t\geq 3$, the odd-ballooning of a graph $H$, denoted by $H(t)$, is the graph obtained from $H$ by replacing each edge of $H$ with an odd cycle of length at least $t$ where the new vertices of the odd cycles are all different.
	It is easy to see that $H_{s,\,t}$ can be seen as an odd-ballooning of a star $S_{s+t}$. In 2020, Zhu, Kang and Shan \cite{ref202} determined the Tur\'{a}n numbers for odd-ballooning of paths and cycles. Recently, Zhu and  Chen determined the Tur\'{a}n numbers for odd-ballooning of trees.
	In this paper, we determine the range of Tur\'{a}n numbers for odd-ballooning of general bipartite graphs when $t\geq 5$ by using progressive induction.

	A covering of a graph $H$ is a set of vertices which meets all edges of $H$. The minimum number of vertices in a covering of $H$ is denoted by $\beta (H)$. An independent covering of a bipartite graph $H$ is an independent set which meets all edges. The minimum number of vertices in an independent covering of a bipartite graph $H$ is denoted by $\gamma(H)$. 
	For any connected bipartite graph $H$, let $A$ and $B$ be its two color classes with $\lvert A\rvert \leq \lvert B\rvert $. Moreover, if $H$ is disconnected, we always partition $H$ into $A \cup B$ such that (1) $\lvert A\rvert $ is as small as possible; (2) min$\{d_H(x):x\in A\}$ is as small as possible subject to (1).
	In this paper, we study the Tur\'{a}n numbers for odd-ballooning of  bipartite graph $H$.
	\begin{lemma}[{Yuan~\cite[]{ref22}}]
		\label{i}
		Let $H$ be a bipartite graph, $V(H)=A\cup B$, then we have $\gamma(H)=\lvert A\rvert$ and each independent covering of $H$ contains either all the vertices of $A$ or all the vertices  of $B$.
	\end{lemma}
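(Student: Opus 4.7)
The plan is to prove both assertions by a direct structural analysis of independent coverings. For the upper bound $\gamma(H) \leq |A|$, observe that $A$ is itself an independent set (since $H$ is bipartite) and meets every edge of $H$ (since every edge has exactly one endpoint in $A$); hence $A$ is an independent covering of $H$.

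For the converse, let $S$ be an arbitrary independent covering and split each color class by membership in $S$: write $A_1 = A \cap S$, $A_2 = A \setminus S$, $B_1 = B \cap S$, $B_2 = B \setminus S$. Independence of $S$ forbids edges between $A_1$ and $B_1$, and the covering property forbids edges between $A_2$ and $B_2$. Consequently every edge of $H$ runs either between $A_1$ and $B_2$, or between $A_2$ and $B_1$; equivalently, the vertex sets $X = A_1 \cup B_2$ and $Y = A_2 \cup B_1$ partition $V(H)$ with no edge between them. If $H$ is connected, then one of $X$, $Y$ must be empty, since otherwise $H$ would split into two nonempty parts with no edge in between, contradicting connectedness. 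In the first case $A_1 = B_2 = \emptyset$ and $S = B$, while in the second $A_2 = B_1 = \emptyset$ and $S = A$. Either way $S$ contains all of $A$ or all of $B$, and since $|A| \leq |B|$ this yields $|S| \geq |A|$ and therefore $\gamma(H) = |A|$.

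For disconnected $H$, apply the dichotomy to each connected component $C$ separately: the restriction of $S$ to $C$ must be the $A$-side or the $B$-side of $C$. Summing sizes over all components and invoking the partition rule (1), which places the smaller side of each component into $A$, gives $|S| \geq |A|$. The main obstacle I anticipate is precisely this disconnected case: one must verify that the component-wise dichotomy, combined with the tiebreaker rules (1) and (2), still forces any \emph{minimum} independent covering to coincide with all of $A$ or all of $B$ globally, rather than some hybrid across components where several components contribute their $A$-sides and others their $B$-sides of equal cardinality.
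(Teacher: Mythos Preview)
The paper does not supply its own proof of this lemma; it is quoted from Yuan~\cite{ref22} without argument. So there is nothing in-paper to compare against, and your proof must stand on its own.

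Your treatment of the connected case is correct and clean: the decomposition $X=A_1\cup B_2$, $Y=A_2\cup B_1$, the observation that independence of $S$ kills all $A_1$--$B_1$ edges while the covering property kills all $A_2$--$B_2$ edges, and the conclusion that connectedness forces one of $X,Y$ to be empty, are all sound. This yields both $\gamma(H)=|A|$ and the dichotomy $S\in\{A,B\}$.

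For disconnected $H$ your concern is not merely an obstacle to be overcome: the second assertion of the lemma, read literally, is false. Take $H=P_2\cup P_2$ with edges $a_1b_1$ and $a_2b_2$, and set $A=\{a_1,a_2\}$, $B=\{b_1,b_2\}$ (this is consistent with rule~(1)). Then $\{a_1,b_2\}$ is a minimum independent covering that contains neither all of $A$ nor all of $B$. The tiebreaker rules (1) and (2) cannot exclude such hybrids whenever two or more components have color classes of equal size. Your derivation of $\gamma(H)=|A|$ via the component-wise dichotomy together with rule~(1) is correct; it is only the second clause that breaks in the disconnected setting, and no amount of additional argument will repair it there.
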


  Given a family of graphs $\mathcal{L}$, the following three parameters $q(\mathcal{L})$, $\mathcal{S}(\mathcal{L})$ and $\mathcal{B}(\mathcal{L})$ are proposed in \cite{ref22}. The \emph{independent covering number} $q(\mathcal{L})$ of $\mathcal{L}$ is defined as 
	\begin{equation*}
		q(\mathcal{L})={\rm min}\{\gamma (L):\;L\in \mathcal{L}\; {\rm is\; bipartite}\}.
	\end{equation*}
	The $independent \;covering\; family \; \mathcal{S}(\mathcal{L})$ of $\mathcal{L}$ is the family of independent coverings of bipartite graphs $L\in \mathcal{L}$ of order $q(\mathcal{L})$. The $subgraph \;covering\;family \;\mathcal{B}(\mathcal{L})$ of $\mathcal{L}\,$ is the set of subgraphs induced by a covering of $L\in \mathcal{L}$ with order less than $q(\mathcal{L})$. If $\beta (L)\geq q(\mathcal{L})$ for each $L\in \mathcal{L}$, then we set $\mathcal{B}(\mathcal{L})=\{K_{q(\mathcal{L})}\}$. 
	
	\begin{definition}[{Ni, Kang and Shan~\cite[]{ref201}}]\label{e}
		 Given a family of graphs $\mathcal{L}$, define $p=p(\mathcal{L})$. For any integer $p':\, 2\leq p'\leq p$, let $\mathcal{M}_{p'}(\mathcal{L})$ be the family of minimal graphs $M$ for which there exist an $L \in \mathcal{L}$ and a $t=t(L)$ such that there is a copy of $L$ in ${M^{'}} \vee {K_{p'-1}}(t,t, \cdots ,t)$ where ${M^{'}}=M \cup I_{t}$. We call this the p$'$-decomposition family of $\mathcal{L}$.
	\end{definition}
 For a bipartite graph $H$, we have $\chi(H(t))=3$. Therefore, in this paper, we mainly use 2-decomposition family of $H(t)$. 
 
 Given a graph $H$, by the definition of $\mathcal{M}_2(H(t))$, for any $M\in \mathcal{M}_2(H(t))$ there exist two independent sets $Y_1$, $Y_2$ such that there is a copy of $H(t)$ as a subgraph in $(M\cup Y_1)\vee Y_2$. Let $H_M$ be a copy of $H$ and $H_M$ satisfy that $H_M(t)\subseteq (M\cup Y_1)\vee Y_2$ is a copy of $H(t)$. $f$ is a bijection: $V(H_M)\to V(H)$ such that $uv\in E(H_M)$ if and only if $f(u)f(v)\in E(H)$. We may directly obtain the following lemma.
 
 \begin{proposition}\label{p}
 	Suppose $H$ is a  graph and $t\geq 3$ is an odd integer. For any $M\in \mathcal{M}_2(H(t))$, $M$ satisfies the following properties:
 	\begin{enumerate}[(i)]
 		\item $e(M)=e(H_M)=e(H)$;
 		\item each odd cycle of $H_M(t)$ expanded from an edge of $H_M$ contains exactly one edge in $M$;
 		\item $V(M)\subseteq V(H_{M}(t))$;
 		\item the vertex of $M$ which is in at least two odd cycles expanded from edges of $H_M$ is the vertex of $H_M$.
 	\end{enumerate}
 \end{proposition}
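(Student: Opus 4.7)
The plan is to exploit two ingredients: a parity observation on odd cycles in $(M \cup Y_1) \vee Y_2$, and the minimality of $M$ in the subgraph order. First I would color the vertices of $V(M) \cup Y_1$ red and those of $Y_2$ blue. Since $Y_1$ is independent and disjoint from $V(M)$, every red--red edge of the join lies in $M$, while every remaining edge is a red--blue crossing. Because the color changes around any closed walk sum to zero modulo $2$, every odd cycle in $(M \cup Y_1) \vee Y_2$ uses an odd, hence positive, number of $M$-edges.

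Parts (iii) and (iv) would then follow by short applications of minimality. For (iii), if some $v \in V(M)$ were absent from $V(H_M(t))$, then none of its incident $M$-edges would be used by $H_M(t)$, and deleting them (together with $v$ if it becomes isolated) would yield a proper subgraph of $M$ still satisfying the defining property, contradicting minimality. For (iv), in $H(t)$ every cycle-vertex introduced by the odd-ballooning belongs to a unique expanded cycle by construction, so any vertex of $H_M(t)$ appearing in two or more expanded cycles must be the image of an $H$-vertex; combined with (iii), this gives (iv).

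The central claim is (ii), which I would attack by contradiction. Assume an expanded cycle $C$ from an edge $f \in E(H_M)$ uses $M$-edges $e_1, \ldots, e_m$ with $m \geq 3$. The strategy is to exhibit a copy of $H(t)$ in $(M^{\ast} \cup Y_1') \vee Y_2'$, where $M^{\ast} := M \setminus \{e_2, \ldots, e_m\}$ is a proper subgraph of $M$ and $Y_1' \supseteq Y_1$, $Y_2' \supseteq Y_2$ are enlargements by fresh independent vertices. All expanded cycles other than $C$ would be kept verbatim, and $C$ would be replaced by a new odd cycle $C'$ of length at least $t$, passing through the two $H_M$-images of the endpoints of $f$ and using $e_1 = x_1 y_1$ as its only $M$-edge. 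I would build $C'$ from three crossing-only sub-paths through the fresh $Y_1', Y_2'$ vertices: one from the first $H_M$-endpoint of $f$ to $x_1$, one from $y_1$ to the other endpoint, and one closing the cycle back to the first endpoint; the lengths of these sub-paths can be tuned with the right parities so that the total length of $C'$ is odd and at least $t$, because the join structure provides paths of every sufficiently long parity-compatible length between any two vertices of the two sides. This would contradict the minimality of $M$, and combined with the parity lower bound it forces $m = 1$.

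With (ii) established, (i) is immediate: by minimality every $M$-edge lies in $E(H_M(t))$ and therefore, since expanded cycles are edge-disjoint, in exactly one expanded cycle; by (ii) each of the $e(H)$ expanded cycles uses exactly one $M$-edge, so $e(M) = e(H) = e(H_M)$. The main obstacle is the construction of $C'$ in (ii): the required parities of the sub-path lengths depend on whether each of the four key vertices (the two $H_M$-endpoints of $f$ and $x_1, y_1$) lies in $V(M)$, $Y_1$, or $Y_2$, but a routine case analysis using the abundance of red--blue crossings in the join confirms that suitable sub-paths always exist once $Y_1', Y_2'$ are taken large enough.
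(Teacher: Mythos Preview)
Your argument is correct. The paper itself does not prove this proposition at all: immediately before stating it, the authors simply write ``We may directly obtain the following lemma,'' and move on. So there is no paper proof to compare against; what you have done is supply the details the authors suppress.

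Your two-coloring parity observation is exactly the right mechanism for the lower bound in (ii), and the minimality-based replacement of $C$ by a rebuilt cycle $C'$ is the natural way to force the upper bound. Two small points are worth making explicit when you write it up. First, in the construction of $C'$ you should record that if $x_1,y_1$ are internal to the original cycle $C$ (i.e.\ not equal to $u$ or $w$), then part (iv), which you have already established, guarantees that $x_1,y_1$ lie on no other expanded cycle of $H_M(t)$; this is what ensures the modified configuration is still a legitimate copy of $H(t)$ with all new cycle-vertices distinct. Second, your case analysis must also cover the degenerate possibilities $\{x_1,y_1\}\cap\{u,w\}\neq\emptyset$ (including $e_1=uw$), where one or two of the three crossing sub-paths collapse to length zero; the parity bookkeeping still works because $x_1,y_1\in V(M)$ are both on the red side. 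With those caveats handled, the ``routine case analysis'' you allude to goes through and the argument is complete.
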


 In the following part of this paper, we always write  $\tilde{q}(H)=q(\mathcal{M}_2(H(t)))$, $\tilde{\mathcal{S}}(H)=\mathcal{S}(\mathcal{M}_2(H(t)))$, $\tilde{\mathcal{B}}(H)=\mathcal{B}(\mathcal{M}_2(H(t)))$, $k(H)=$min$\{d_{M}(x):x\in S, S\in \tilde{\mathcal{S}}(H)\}$, where $M\in \mathcal{M}_2(H(t))$ has the independent covering set  $S$. 
	
	Set $F(n,\,q)=I_{q-1}\vee T_{2}(n-q+1)$ and $f(n,\,q)=e(F(n,\,q))$. 
	For a family of graphs $\mathcal{L}$, denote by $\mathcal{F}(n,\,q,\,k,\,\mathcal{L})$, the set of graphs which are obtained by taking an $F(n,\,q)$, putting a copy of $K_{k,\,k}$ in one class of $T_2(n-q+1)$ and putting a copy of a member of {\rm {EX}}$(q-1,\,\mathcal{L})$ in $I_{q-1}$. 
	Our main results are as follows.
	\begin{theorem}\label{d}
		Let $H$ be a bipartite graph, $t\geq 5$ be an odd integer and  $n$ be a sufficiently large integer. Then
		\begin{equation*}
			f(n,\,\tilde{q}(H))+ {\rm {ex}}(\tilde{q}(H)-1,\,\tilde{\mathcal{B}}(H))\leq {\rm {ex}}(n,\,H(t)) \leq f(n,\,\tilde{q}(H))+{\rm {ex}}(\tilde{q}(H)-1,\,\tilde{\mathcal{B}}(H))+(k(H)-1)^2.
		\end{equation*}
		Moreover, if  ${\rm {ex}}(n,\,H(t)) = f(n,\,\tilde{q}(H))+{\rm {ex}}(\tilde{q}(H)-1,\,\tilde{\mathcal{B}}(H))+(k(H)-1)^2$ holds, then the graphs in $\mathcal{F}(n,\,\tilde{q}(H),\,k(H)-1,\,\tilde{\mathcal{B}}(H))$ are the only extremal graphs for $H(t)$.
	\end{theorem}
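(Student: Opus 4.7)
The proof has two halves. For the lower bound, I take $G_{0}\in \mathcal{F}(n,\tilde{q}(H),k(H)-1,\tilde{\mathcal{B}}(H))$, so that $G_{0}$ is obtained from $F(n,\tilde{q}(H))=I_{\tilde{q}(H)-1}\vee T_{2}(n-\tilde{q}(H)+1)$ by planting a $K_{k(H)-1,k(H)-1}$ inside one class of $T_{2}(n-\tilde{q}(H)+1)$ and an extremal $\tilde{\mathcal{B}}(H)$-free graph on $I_{\tilde{q}(H)-1}$. A direct count gives $e(G_{0})=f(n,\tilde{q}(H))+\mathrm{ex}(\tilde{q}(H)-1,\tilde{\mathcal{B}}(H))+(k(H)-1)^{2}$. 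To verify $G_{0}$ is $H(t)$-free, I would suppose some $H(t)\subseteq G_{0}$; by Proposition \ref{p} this embedding produces an $M\in \mathcal{M}_{2}(H(t))$ realised inside $G_{0}$ together with two independent blobs realising the $p'=2$ join, and Lemma \ref{i} forces $M$ to have an independent covering set of size $\tilde{q}(H)$ landing inside $I_{\tilde{q}(H)-1}$, which has only $\tilde{q}(H)-1$ vertices; the planted $K_{k(H)-1,k(H)-1}$ provides just enough internal edges to realise $(k(H)-1)^{2}$ adjacencies but cannot supply the $k(H)$-th incidence at a covering vertex, ruling out the embedding.

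For the upper bound, suppose $G$ is $H(t)$-free on $n$ vertices with $e(G)\ge f(n,\tilde{q}(H))+\mathrm{ex}(\tilde{q}(H)-1,\tilde{\mathcal{B}}(H))$. Since $\chi(H(t))=3$, Theorem \ref{b} furnishes a large bipartite Tur\'an subgraph; fix a max cut $(A,B)$ of $G$ so that $e(G[A])+e(G[B])=o(n^{2})$. Let $W$ be the set of vertices having at least a small constant fraction of their neighbors in each of $A$ and $B$. The crucial structural claim is $|W|\le \tilde{q}(H)-1$: otherwise, pick a bipartite $L\in \mathcal{M}_{2}(H(t))$ realising the minimum $\gamma$, plant its independent covering set $S$ inside $W$ and the remaining vertices of $L$ inside a large $T_{2}(N,N)\subseteq G[(A\cup B)\setminus W]$, and expand each edge of $L$ (and of the join) into a distinct odd cycle of length at least $t$ by routing it through private pairs of common neighbors in $A$ and $B$. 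The hypothesis $t\ge 5$ is used here to provide the extra room to keep the expanded cycles internally disjoint, which produces a copy of $H(t)$, a contradiction.

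With $|W|\le \tilde{q}(H)-1$ in hand, the edges of $G$ split into (i) edges between $W$ and $V(G)\setminus W$, (ii) bipartite edges inside $V(G)\setminus W$ across $(A,B)$, (iii) edges inside $G[W]$, and (iv) ``internal'' edges inside $A$ or inside $B$ with both endpoints outside $W$. Bounding (i)+(ii) by $f(n,\tilde{q}(H))$ and tracking tightness forces $|W|=\tilde{q}(H)-1$ with $W$ completely joined to $V(G)\setminus W$ and $G[(A\cup B)\setminus W]$ isomorphic to $T_{2}(n-\tilde{q}(H)+1)$. Then I bound (iii) by $\mathrm{ex}(\tilde{q}(H)-1,\tilde{\mathcal{B}}(H))$: any copy of a member of $\tilde{\mathcal{B}}(H)$ inside $G[W]$ corresponds to a covering of some $M\in \mathcal{M}_{2}(H(t))$ of size strictly less than $\tilde{q}(H)$, and the remaining vertices of $M$, together with the odd-cycle expansions through the bipartite part, yield an $H(t)$ as above. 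Finally, (iv) is bounded by $(k(H)-1)^{2}$: $k(H)$ independent internal edges supply enough degree at covering vertices to force an embedding of $H(t)$ by the same mechanism, and tightness forces the extremal pattern $K_{k(H)-1,k(H)-1}$ inside a single class. Summing the four bounds yields the claimed upper bound, and tracing equality through every step shows the only extremal graphs are those in $\mathcal{F}(n,\tilde{q}(H),k(H)-1,\tilde{\mathcal{B}}(H))$.

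The main obstacle is the two embedding arguments that establish $|W|\le \tilde{q}(H)-1$ and the sharp $(k(H)-1)^{2}$ bound on internal edges: both rely on planting a specific $M\in \mathcal{M}_{2}(H(t))$ into $G$ and then routing $e(M)$ odd cycles of length at least $t\ge 5$ through disjoint vertex sets in the bipartite part, while simultaneously keeping the error terms under control through Simonovits's progressive induction, which peels off vertices of atypical degree one at a time and reduces the problem to $n-1$. Balancing the induction error against the target constants $\mathrm{ex}(\tilde{q}(H)-1,\tilde{\mathcal{B}}(H))$ and $(k(H)-1)^{2}$ is what makes the argument delicate.
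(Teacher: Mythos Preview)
Your proposal has the right skeleton but contains two genuine gaps.

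\textbf{Lower bound.} You try to show that every $G_{0}\in \mathcal{F}(n,\tilde{q}(H),k(H)-1,\tilde{\mathcal{B}}(H))$ is $H(t)$-free. If that worked it would force the upper and lower bounds to coincide for \emph{every} bipartite $H$, which is strictly stronger than what Theorem~\ref{d} asserts (note the ``Moreover, if \ldots'' clause). Your justification, that the planted $K_{k(H)-1,k(H)-1}$ ``cannot supply the $k(H)$-th incidence at a covering vertex'', is not a proof: a member $M\in\mathcal{M}_{2}(H(t))$ need not place its minimum-degree covering vertex inside the planted block, and nothing you wrote rules out realising $M$ using the $\tilde{q}(H)-1$ vertices of $I_{\tilde{q}(H)-1}$ together with vertices of the $K_{k(H)-1,k(H)-1}$. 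The paper avoids this entirely: for the lower bound it only shows that $\mathcal{F}(n,\tilde{q}(H),0,\tilde{\mathcal{B}}(H))$ is $H(t)$-free (Lemma~\ref{l}), which is exactly what the stated inequality requires.

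\textbf{The $(k(H)-1)^{2}$ bound on internal edges.} Your step~(iv) says ``$k(H)$ independent internal edges supply enough degree \ldots\ to force an embedding'', i.e.\ you are bounding $\alpha'(G')\le k(H)-1$ for the graph $G'$ of internal edges. That alone does not give $e(G')\le (k(H)-1)^{2}$: with $\alpha'\le k(H)-1$ and $\Delta\le k(H)-1$ the Chv\'atal--Hanson bound (Lemma~\ref{c}) only yields $e(G')\le k(H)(k(H)-1)$, and the equality case is not $K_{k(H)-1,k(H)-1}$. The paper needs a finer forbidden configuration: for every $x\in V(G_i)$ one has
\[
d_{G_i}(x)+\alpha'\bigl(G_i-N_{G_i}[x]\bigr)+\alpha'\bigl(G[E_{3-i}(x)]\bigr)\le k(H)-1,
\]
established by an explicit construction of an $S_{k(H)}(t)$ centred at $x$ (Claim~1 of Theorem~\ref{m}). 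Only after further work (Claims~2--4) reducing to a single side does the Hou--Qiu--Liu lemma (Lemma~\ref{k}) convert the condition $d(x)+\alpha'(G'-N[x])\le k(H)-1$ into $e(G')\le (k(H)-1)^{2}$ with equality exactly at $K_{k(H)-1,k(H)-1}$. Your sketch does not reach this condition, so both the bound and the characterisation of equality are unproved.

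A secondary issue: your passage from $|W|\le\tilde{q}(H)-1$ to ``$|W|=\tilde{q}(H)-1$, $W$ complete to the rest, and $G-W\cong T_{2}(n-\tilde{q}(H)+1)$'' is asserted by ``tracking tightness'', but nothing in your edge count forces the bipartite part to be balanced or complete. The paper handles this not by a one-shot stability statement but by progressive induction (Theorem~\ref{h}): it removes an induced $T_{2}(2n_{4})$ (not single vertices), compares $e(V(\widehat G),V(T'_{s}))$ with the corresponding count in $F_{n}$, and shows the difference is nonpositive; strict inequality feeds the induction, and equality pins down $s=\tilde{q}(H)-1$, $D=\emptyset$, after which Theorem~\ref{m} finishes. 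Your ``peel off one vertex at a time'' description does not match this and would not control the error terms.
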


	\section{Characterizations of $\mathcal{M}_2(H(t))$ and $\tilde{\mathcal{B}}(H)$} 
	Given a graph $H$, the {\emph{vertex division}} on some non-pendent vertex $v$ of $H$  is defined as follows:  $v$ is replaced by an independent set $\{v',\,v_1,\,v_2,\,\cdots,\,v_m\}$ ($1\leq m\leq d_H(v)-1$) in which  $v_i$ ($1\leq i\leq m$) is adjacent to exactly one distinct vertex in $N_H(v)$ and $v'$ is adjacent to the remaining neighbors of $v$ in $H$. In particular, if $m=d_{H}(v)-1$, it is called vertex split in \cite{ref13,ref201}. Denote by $\mathcal{D}(H)$, the family of graphs which can be obtained by applying vertex division on some vertex set $U\subseteq V(H)$. An isolated edge is an edge whose endpoints has degree 1.
	Lemma \ref{f} shows that the 2-decomposition family of odd-ballooning for any graph $H$ is actually the family of graphs obtained from dividing some vertices of $H$.

	\begin{lemma}\label{f}
		Let $H$ be any graph and $t\geq 5$ be an odd integer, then $\mathcal{M}_2(H(t))\subseteq \mathcal{D}(H)$ holds.
	\end{lemma}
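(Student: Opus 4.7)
The plan is to realize $M$ explicitly as a graph obtained from a copy of $H$ by vertex division, which is exactly the definition of membership in $\mathcal{D}(H)$. Fix $M\in\mathcal{M}_2(H(t))$ together with the independent sets $Y_1,Y_2$ and the copy $H_M(t)\subseteq(M\cup Y_1)\vee Y_2$ supplied by Definition~\ref{e}. Proposition~\ref{p}(i)--(ii) yields a bijection $\phi:E(H_M)\to E(M)$ such that $\phi(e)$ is the unique $M$-edge on the odd cycle $C_e$ that replaces $e$; both endpoints of $\phi(e)$ lie in $V(M)\subseteq V(M)\cup Y_1$, while every remaining edge of $C_e$ crosses to $Y_2$, so $\phi(e)$ is the only ``within-part'' edge of $C_e$.

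For each $e=vw\in E(H_M)$ the vertices $v$ and $w$ split $C_e$ into two arcs, exactly one of which contains $\phi(e)$. Traversing that arc from $v$ toward $w$, let $a_e$ be the first endpoint of $\phi(e)$ encountered and $b_e$ the second (with $a_e=v$ or $b_e=w$ allowed). Set $S_v:=\{a_e:v\in e\in E(H_M)\}$ for each $v\in V(H_M)$. The objective is to show that $\{S_v\}_{v\in V(H_M)}$ partitions $V(M)$ into independent sets so that every $M$-edge $\phi(vw)=a_eb_e$ joins $S_v$ to $S_w$; this is exactly the data exhibiting $M$ as a vertex division of $H_M\cong H$. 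Each $a_e$ is either $v$ itself or a new vertex of $C_e$ because $V(C_e)\cap V(H_M)=\{v,w\}$, and distinct cycles contribute disjoint sets of new vertices by the ballooning construction, so each $S_v$ is well defined.

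The key partition step rests on Proposition~\ref{p}(iv): a vertex $z\in V(M)\setminus V(H_M)$ lies on a single cycle $C_e$, and hence its unique incident $M$-edge must be $\phi(e)$, placing $z$ in exactly one $S_v$; meanwhile an $H_M$-vertex $z\in V(M)$ can occur as $a_e$ only when $v=z$, for otherwise $z$ would be an interior point of the arc from $v$ to the $\phi(e)$-endpoint, contradicting $z\in V(H_M)$. Minimality of $M$ forbids isolated vertices, so every $z\in V(M)$ lies on some $\phi(e)$ and thus in some $S_v$; combining these facts yields $V(M)=\bigsqcup_v S_v$. Since every $M$-edge has the form $\phi(vw)=a_eb_e$ with $a_e\in S_v$, $b_e\in S_w$, $v\neq w$, each $S_v$ is independent. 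For each $v$, the piece $v$ itself (when $v\in V(M)$) carries all $d_M(v)$ incident $M$-edges, while every other element of $S_v$ is a new vertex whose only $M$-edge is the corresponding $\phi(e)$; this is precisely the ``one main piece plus degree-one leaves'' structure of a vertex division.

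The main obstacle is the bookkeeping in the partition step, where one must simultaneously keep track of the case $a_e=v$ and exclude any spurious reassignment of a new vertex across different $S_v$'s; Proposition~\ref{p}(iv) is indispensable here because it pins a new vertex to a single cycle, and the hypothesis $t\geq 5$ ensures that the arc of $C_e$ containing $\phi(e)$ is long enough that the ``$v$-side'' and ``$w$-side'' endpoints of $\phi(e)$ are genuinely distinct whenever $\phi(e)\neq vw$, ruling out the degenerate triangular configurations that can arise when $t=3$. With these verifications in hand, $M$ is exactly the graph produced by performing vertex division at each $v\in V(H_M)$ with $|S_v|\geq 2$, and so $M\in\mathcal{D}(H)$.
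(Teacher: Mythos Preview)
Your proof is correct and follows essentially the same approach as the paper: both arguments use Proposition~\ref{p} together with the minimality of $M$ to trace each $M$-edge back to a unique edge of $H_M$ and then read off the vertex-division structure. The paper carries this out vertex-by-vertex (separately treating $v\in V(M)\cap V(H_M)$ and $v\in V(M)\setminus V(H_M)$), whereas you package the same casework globally via the bijection $\phi$ and the partition $\{S_v\}$; one small quibble is that your justification for why $t\ge 5$ is needed (``the $v$-side and $w$-side endpoints of $\phi(e)$ are genuinely distinct'') does not actually invoke the hypothesis---the two endpoints of any edge are always distinct---so the proof as written works for $t\ge 3$ as well and the remark can be dropped.
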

	\begin{proof}
		To prove Lemma \ref{f}, we may show that any graph in $\mathcal{M}_2(H(t))$ can be obtained by using vertex division on some vertices of $V(H)$.
		For any graph $M\in \mathcal{M}_2(H(t))$,
		by Proposition \ref{p} (i), we have $e(M)=e(H)$.
		$H_M$ is a copy of $H$ and there exist two independent sets $Y_1$ and $Y_2$ such that $H_M(t)\subseteq (M\cup Y_1)\vee Y_2$ holds.
	 Furthermore, $d_M(v)\geq 1$ holds for any vertex $v\in V(M)$. If there is an isolated vertex $v$ in $M$, then we may add a  vertex  $v'$ in $Y_1$ to replace $v$, and we have $H_M(t)\subseteq ((V(M)\backslash\{v\})\cup (Y_1\cup \{v'\}))\vee Y_2$ which contradicts the minimality of $M$.
		
		For any vetrex $v$ of $M$, first, we suppose $v\in V(M)\cap V(H_M)$.
	If $d_M(v)> d_{H_M}(v)$, then $d_M(v)-d_{H_M}(v)$ edges are not  in $H_M(t)$ and it  contradicts  the minimality of $M$. Hence we have $d_M(v)\leq d_{H_M}(v)$. 
	When  $d_M(v)<d_{H_M}(v)$, then $Y_2$ contains $x$ neighbors ($d_{H_M}(v)-d_M(v)\leq x\leq d_{H_M}(v)$) of $v$ in $H_M$. 
		Each edge between $Y_2$ and $v$ can be expanded into an odd cycle by using one edge in $M$. By the minimality of $M$, in $M$ there is a star $S_{d_M(v)}$ and $d_{H_M}(v)-d_M(v)$ distinct edges can be used to obtain an $S_{d_{H_M}(v)}(t)$. Since the new vertices of $S_{d_{H_M}(v)}(t)- V(S_{d_{H_M}(v)})$ are all different, these $d_{H_M}(v)-d_M(v)$ edges are independent. Therefore,
		to obtain $M$, we may divide the vertex $f(v)$ of $H$ into a vertex with degree $d_M(v)$ and an independent set of order $d_{H}(v)-d_M(v)$.
		When  $d_M(v)=d_{H_M}(v)$, the adjacency relation of $v$ in $M$ is the same as $f(v)$ in $H$.
		
		Now we suppose $v\in V(M)\cap (V(H_M(t))\backslash V(H_M))$. The fact that the new vertices of odd cycles are all different implies the edges incident to $v$ are in the same odd cycle.
		By Proposition \ref{p} we have $d_M(v)=1$. Suppose $v$ is in an odd cycle expanded from the edge $uw$ of $H_M$. Then the edge $uw$ is not in $M$, otherwise this odd cycle has two edges in $M$, a contradiction. 
		Suppose $uw$ is between $M$ and $Y_2$, $u\in V(M)$, $w\in Y_2$. Since $N_{H_M}(w)\subseteq V(M)\cup Y_1$, each odd cycle of $S_{d_{H_M}(w)}(t)$ contains exactly one  edge in $M$, there are $d_{H_M}(w)$ independent edges in $M$. Thus, to obtain $M$, we may use vertex split on $f(w)$ of $H$. If $uw$ is between $Y_1$ and $Y_2$, $u\in Y_1$, $w\in Y_2$, then $N_{H_M}(u)\subseteq Y_2$. Noting that $S_{d_{H_M}(u)}(t)$ spanned by $N_{H_M}[u]$ in $H_M(t)$ contains exactly $d_{H_M}(u)$ independent edges in $M$.
		We may deduce that the edge which contains vertex $v$ is an isolated edge in $M$. Hence  to obtain $M$ we may use vertex split on both $f(u)$ and $f(w)$ of $H$. 

	Therefore, we have $M\in \mathcal{D}(H)$. As $M$ is arbitrary, we have 
		$\mathcal{M}_2(H(t))\subseteq \mathcal{D}(H)$. 
	\end{proof}

\begin{lemma}\label{2.3}
	Let $H$ be a bipartite graph, $t\geq 5$ be an odd integer. Then $e(H)P_2\in \mathcal{M}_2(H(t))$ holds.
\end{lemma}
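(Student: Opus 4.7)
Write $M = e(H)P_2$ for brevity. The plan is to verify the two defining properties of membership in $\mathcal{M}_2(H(t))$: (i) there is an integer $s$ such that some copy of $H(t)$ embeds into $(M \cup I_s) \vee I_s$, and (ii) no proper subgraph of $M$ has property (i).

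For (i), I would let $A \cup B$ be the bipartition of $H$, and for each edge $e = a_e b_e \in E(H)$ (with $a_e \in A$, $b_e \in B$) denote the corresponding $P_2$ of $M$ by $x_e y_e$. Place $A$ together with $e(H) \cdot \tfrac{t-5}{2}$ fresh filler vertices into one independent set $Y_1$, and $B$ together with $e(H) \cdot \tfrac{t-3}{2}$ fresh filler vertices into the other independent set $Y_2$, taking $s$ at least as large as the larger of the two. For each edge $e$, build the cycle
\[
a_e,\; b_e,\; x_e,\; y_e,\; w_{e,1},\; w_{e,2},\; \ldots,\; w_{e,t-4},\; a_e
\]
of length exactly $t$, where the fillers alternate: $w_{e,i} \in Y_2$ for odd $i$ and $w_{e,i} \in Y_1$ for even $i$. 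Since $t$ is odd, $t-4$ is odd, so $w_{e,t-4} \in Y_2$ and the closing edge $w_{e,t-4} a_e$ connects $Y_2$ to $a_e \in Y_1$. Every consecutive pair of vertices in this cycle other than $x_e y_e$ crosses the bipartition $(V(M) \cup Y_1,\, Y_2)$ and hence is an edge of $(M \cup Y_1) \vee Y_2$; the pair $x_e y_e$ is precisely the $P_2$-edge. Using disjoint fresh fillers for different edges of $H$ ensures the $e(H)$ cycles share only vertices of $H$ (no new vertices), producing the desired copy of $H(t)$.

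For (ii), let $M^* \subsetneq M$ be any proper subgraph. Since $M$ has minimum degree one, this forces $e(M^*) \leq e(H) - 1$. In the host graph $(M^* \cup Y_1) \vee Y_2$, consider the bipartition $(V(M^*) \cup Y_1,\, Y_2)$: every edge of the host either crosses this bipartition or lies inside $V(M^*)$, so the only within-part edges are the edges of $M^*$. Hence any odd cycle in the host uses at least one edge of $M^*$. Since a copy of $H(t)$ comprises $e(H)$ pairwise edge-disjoint odd cycles (distinct cycles share only vertices of $H$), an embedding of $H(t)$ would require at least $e(H)$ distinct edges of $M^*$, contradicting $e(M^*) \leq e(H) - 1$.

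The construction in (i) is mostly bookkeeping; the one point needing care is the parity check that forces $w_{e,t-4} \in Y_2$ so the cycle closes back to $a_e$, which uses exactly the hypothesis that $t \geq 5$ is odd. Minimality in (ii) is then a short pigeonhole argument once the bipartite structure of the host graph and the edge-disjointness of the odd cycles in $H(t)$ are noted.
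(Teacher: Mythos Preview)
Your proof is correct and follows essentially the same approach as the paper: both place the vertices of $H$ across the bipartition of the host $(M\cup Y_1)\vee Y_2$ and route one $P_2$ of $M$ through each odd cycle. Two minor differences are worth noting: the paper puts $A$ inside $V(e(H)P_2)$ rather than in $Y_1$, and for minimality the paper simply invokes Proposition~\ref{p}(i) (any member of $\mathcal{M}_2(H(t))$ has exactly $e(H)$ edges), whereas you give the underlying parity/pigeonhole argument directly, which makes your version more self-contained.
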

\begin{proof}
	Suppose $Y_1$ and $Y_2$ are two independent sets and large enough.  Let $H'$ be a copy of $H$, $V(H')=A'\cup B'$. $A'$ corresponds to $A$, $B'$ corresponds to $B$. Let $A'\subseteq V(e(H)P_2)$ and they are independent in $e(H)P_2$, $B'\subseteq Y_2$, then we have $H'\subseteq e(H)P_2\vee Y_2$.
	In the graph $(e(H)P_2\cup Y_1)\vee Y_2$,
	 the edge of $H'$  can be expanded into an odd cycle by using an edge in $e(H)P_2$ and some vertices of $Y_1$ and $Y_2$, then we have $H'(t)\subseteq (e(H)P_2\cup Y_1)\vee Y_2$. Therefore, $e(H)P_2$ contains a subgraph in $\mathcal{M}_2(H(t))$. Moreover, Proposition \ref{p}  (i) implies that $e(H)P_2\in \mathcal{M}_2(H(t))$ holds.
\end{proof}

	Let $H$ be a bipartite graph, $t\geq 5$ be an odd integer. Denoted by $\mathcal{N}(H(t))\subseteq \mathcal{M}_2(H(t))$ is the family of graphs $M$ with $\gamma(M)=\tilde{q}(H)$.
	\begin{lemma}\label{3}
		Let $H$ be a bipartite graph,  $M\in \mathcal{M}_2(H(t))$ be a graph with an independent covering $S\in \tilde{\mathcal{S}}(H)$.
		If  $S$  contains a vertex with degree one in $M$, then $\mathcal{N}(H(t))$ contains a graph with an isolated edge.
	\end{lemma}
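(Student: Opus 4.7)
The plan is to produce an element of $\mathcal{N}(H(t))$ with an isolated edge by performing one further vertex division inside $M$. Let $u$ be the unique $M$-neighbour of $v$; because $S$ is an independent covering and $v\in S$, we have $u\notin S$. In the easy case $d_M(u)=1$, the edge $uv$ is already isolated in $M$, and $M$ itself belongs to $\mathcal{N}(H(t))$ since $|S|=\tilde{q}(H)$ forces $\gamma(M)=\tilde{q}(H)$. Assume from now on $d_M(u)\ge 2$, and fix an embedding $H_M(t)\subseteq(M\cup Y_1)\vee Y_2$ certifying $M\in\mathcal{M}_2(H(t))$; Proposition \ref{p}(iv) then forces $u\in V(H_M)$.

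Define $M^{*}$ from $M$ by splitting $u$ into two new vertices $u_1,u_2$, where $u_1$ inherits only the edge to $v$ and $u_2$ inherits the remaining $M$-edges at $u$. This is a further vertex division, so Lemma \ref{f} gives $M^{*}\in\mathcal{D}(H)$; clearly $e(M^{*})=e(M)=e(H)$, and $u_1v$ is an isolated edge of $M^{*}$. To show $M^{*}\in\mathcal{M}_2(H(t))$ I would modify the original embedding: promote $u_2$ to play the role previously played by $u$ as an $H_M$-vertex; in every odd cycle containing $u$ whose $M$-edge is different from $uv$, relabel $u$ as $u_2$ and use the $M^{*}$-edge $u_2w$ in place of the $M$-edge $uw$; in the unique cycle whose $M$-edge is $uv$, splice in a detour $u_2-y-u_1-v$ through one fresh vertex $y$ adjoined to $Y_2^{*}$. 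This replaces $uv$ by $u_1v\in E(M^{*})$ and lengthens that cycle by $2$, preserving oddness and keeping its length at least $t$. Since every element of $\mathcal{M}_2(H(t))$ has exactly $e(H)$ edges by Proposition \ref{p}(i) and $M^{*}$ already has $e(H)$ edges together with the embedding property, $M^{*}$ must be edge-minimal and hence lies in $\mathcal{M}_2(H(t))$.

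Finally, $S$ remains an independent covering of $M^{*}$: it is still independent (no new $S$-vertex is introduced), and it still covers every edge, since each $w\in N_M(u)\setminus\{v\}$ was forced into $S$ by $u\notin S$, so the new edges $u_2w$ are covered by $w\in S$, while $u_1v$ is covered by $v\in S$. Hence $\gamma(M^{*})\le|S|=\tilde{q}(H)$, and the reverse inequality holds by the very definition of $\tilde{q}(H)$. Thus $M^{*}\in\mathcal{N}(H(t))$ contains the desired isolated edge $u_1v$.

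The main obstacle is the embedding step: one must verify that the rerouting works regardless of whether $v$ is itself an $H_M$-vertex or merely a new vertex of the odd cycle containing $uv$, and that the fresh vertices $y$ and $u_1$ are not reused in any other odd cycle. The latter is automatic because $u_1$ carries a single $M^{*}$-edge and $Y_2^{*}$ may be enlarged at will; the former is handled uniformly because the detour $u_2-y-u_1-v$ can be inserted at the place where the original cycle traversed the $M$-edge $uv$, no matter the global shape of that cycle.
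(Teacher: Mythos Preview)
Your proof is correct and follows essentially the same approach as the paper: split the unique neighbour $u$ of the degree-one vertex $v\in S$ into two vertices (one carrying only the edge to $v$, the other carrying the remaining edges), verify the resulting graph lies in $\mathcal{M}_2(H(t))$ via an adjusted embedding, and observe that $S$ remains a minimum independent covering. Your embedding argument via the detour $u_2\text{--}y\text{--}u_1\text{--}v$ is in fact more explicit than the paper's rather terse sentence ``the edge $u''w$ can be expanded into an odd cycle by using the edge $v'u'$ in $M'$,'' and your closing paragraph correctly anticipates and disposes of the case analysis on whether $v\in V(H_M)$. One cosmetic remark: invoking Lemma~\ref{f} to place $M^{*}$ in $\mathcal{D}(H)$ is unnecessary, since you never use that membership---your direct embedding plus the edge count already yields $M^{*}\in\mathcal{M}_2(H(t))$.
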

	
	\begin{proof}
		Let  $H_M$ be a copy of $H$ and there exist two independent sets $Y_1$ and $Y_2$ such that $H_M(t)\subseteq (M\cup Y_1)\vee Y_2$ holds. If there is an isolated edge in $M$, the conclusion holds. Now suppose  there is no isolated edge in $M$.
	
	Let $v'\in S$ and $N_M(v')=\{u\}$ hold. If $d_M(u)=1$, then the edge $uv'$ is an isolated edge in $M$, a contradiction. Thus we have $d_M(u)\geq 2$,  then by Proposition \ref{p} (iv), $u$ is a vertex of $H_M$. Let $M'$ be the graph obtained from dividing vertex $u$ into  $\{u',\,u''\}$, $u'$ is adjacent to $v'$ with degree one, $u''$ is adjacent to the remaining neighbors of the original vertex $u$ in $M$. In the graph $(M'\cup Y_1)\vee Y_2$, there is a vertex $w\in Y_2$ adjacent to $u''$, the edge $u''w$ can be expanded into an odd cycle by using the edge $v'u'$ in $M'$. As there is an $H_M(t)$ in $(M\cup Y_1)\vee Y_2$, and the vertices in $M'$ except $v'$, $u'$, $u''$ have the same neighbors of the vertices in $M$, $(M'\cup Y_1)\vee Y_2$ contains a copy of $H(t)$ as a subgraph. It is easy to see that $e(M)=e(M')$ and there is an isolated edge in $M'$, thus we have $M'\in \mathcal{M}_2(H(t))$. 	As $S$ is the independent covering of $M$, $v'$ is a vertex in $S$, we have $u\notin S$. When we divide the vertex $u$ of $M$,  $S$ is also an independent covering of $M'$. From the definition of $\tilde{\mathcal{S}}(H)$, we have $\lvert S\rvert =\tilde{q}(H)$ and then $\gamma (M')\leq \tilde{q}(H)$. Because $\tilde{q}(H)$ is the minimum size of the independent covering of graphs in $\mathcal{M}_2(H(t))$ and $M'\in \mathcal{M}_2(H(t))$, we may deduce $\gamma (M')= \tilde{q}(H)$ and then $M'\in \mathcal{N}(H(t))$. 
	\end{proof}
	
	\begin{lemma}\label{2.4}
		Let $H$ be a bipartite graph with $V(H)=A\cup B$, $M\in \mathcal{M}_2(H(t))$ be a graph with an independent covering $S\in \tilde{\mathcal{S}}(H)$ and min $\{d_M(x):x\in S\}=k(H)$.
		 If each vertex of $S$ has degree at least 2 in $M$,  then $\lvert A\rvert =\tilde{q}(H)$ and min $\{d_H(x):x\in A\}=k(H)$.
	\end{lemma}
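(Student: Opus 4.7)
The plan is to combine the $\mathcal{D}(H)$-structure of $M$ given by Lemma~\ref{f} with a lift of $S$ along the isomorphism $f\colon V(H_M)\to V(H)$, reducing the statement to an analysis of independent coverings of $H$ via Lemma~\ref{i}.

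First I would observe that every divided vertex $v\in U$ contributes pendant copies $v_i$ of degree one in $M$; since each $x\in S$ has $d_M(x)\ge 2$, $S$ avoids every such $v_i$, and by Proposition~\ref{p} (iv) every $x\in S$ belongs to $V(H_M)$. Setting $S'=f(S)\subseteq V(H)$, I would then check that $S'$ is an independent covering of $H$. Independence is immediate: if $x,y\in S$ satisfied $f(x)f(y)\in E(H)$, the edge $xy$ of $H_M$ would lie on the $M\cup Y_1$ side of the join in $(M\cup Y_1)\vee Y_2$ and therefore be an $M$-edge, contradicting the independence of $S$. For the covering condition, I would use Proposition~\ref{p} (ii) to locate the unique $M$-edge in the cycle $C_e$ associated to each $e\in E(H)$, together with the fact that the new vertices of distinct cycles of $H_M(t)$ are all distinct from $V(H_M)$: the vertices of $C_e$ that belong to $V(H_M)$ are exactly the two endpoints of $f^{-1}(e)$, and the endpoint of the $M$-edge that $S$ hits must be one of them.

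For the size I would check that $H$ itself lies in $\mathcal{M}_2(H(t))$, by embedding $H_M=H$ into $(H\cup Y_1)\vee Y_2$ and expanding each edge $uv$ into an odd cycle of length at least $t\ge 5$ using $uv$ as its $M$-edge and fresh vertices of $Y_1\cup Y_2$ as the internal ones; minimality then follows from Proposition~\ref{p} (i). Together with Lemma~\ref{i} this yields $\tilde{q}(H)\le \gamma(H)=\lvert A\rvert$, whereas the lift above combined with Lemma~\ref{i} forces $\lvert S\rvert=\lvert S'\rvert \ge \lvert A\rvert$. Hence $\lvert S\rvert=\lvert A\rvert=\tilde{q}(H)$ and, by Lemma~\ref{i}, $S'$ must coincide with $A$ or (only when $\lvert A\rvert=\lvert B\rvert$) with $B$.

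In the case $S'=A$ I would argue $U\cap A=\emptyset$ by contradiction: any divided $v\in A\cap U$ would produce a pendant $v_i$ whose single $M$-neighbour $u_i^{*}$ would have to lie in $S$ in order to cover the edge $v_iu_i^{*}$, yet either $d_M(u_i^{*})=1$ (so $u_i^{*}\notin S$) or $u_i^{*}\in V(H_M)$ with $f(u_i^{*})\in N_H(v)\subseteq B$, contradicting $f(S)=A$. Then no vertex of $A$ is divided, $d_M(a)=d_H(a)$ for every $a\in A$, $S=A$, and $k(H)=\min_{a\in A}d_H(a)$. When $S'=B$ the symmetric argument yields $\min_{b\in B}d_H(b)=k(H)$, and combining this with the partition convention $\min_{a\in A}d_H(a)\le \min_{b\in B}d_H(b)$ and the upper bound $k(H)\le \min_{a\in A}d_H(a)$ coming from the valid pair $(H,A)$ again gives $\min_{a\in A}d_H(a)=k(H)$. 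The hard part will be this last step: carefully distinguishing whether $u_i^{*}$ lives in $V(H_M)$ or in $V(M)\setminus V(H_M)$, since each possibility forces a contradiction but through a different mechanism.
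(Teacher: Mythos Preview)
Your argument is correct and reaches the same conclusion, but the route differs from the paper's. The paper never invokes Lemma~\ref{f} or the $\mathcal{D}(H)$ structure at all: it works directly with the embedding $H_M(t)\subseteq (M\cup Y_1)\vee Y_2$. First it notes that the degree hypothesis on $S$ forbids isolated edges in $M$; then it checks, by a short case analysis on where an edge $e\in E(H_M)$ can sit inside $(M\cup Y_1)\vee Y_2$, that $S$ already covers $H_M$, giving $|A|\le |S|=\tilde q(H)$. For the degree statement the paper shows $d_M(w)=d_{H_M}(w)$ for every $w\in S$ in one stroke: a strict inequality $d_M(w)<d_{H_M}(w)$ would force some $H_M$-neighbour of $w$ into $Y_2$, and the unique $M$-edge on that cycle would then have both endpoints among the new cycle vertices, producing an isolated edge in $M$ --- contradiction.

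Your approach trades this direct embedding argument for the structural picture coming from Lemma~\ref{f}: you view $M$ as a vertex-divided copy of $H$, lift $S$ through $f$, and analyse the pendants created by division. Two things your approach buys: you verify explicitly that $H\in\mathcal{M}_2(H(t))$ (giving the inequality $\tilde q(H)\le |A|$, which the paper leaves implicit), and you handle the case $S'=B$ when $|A|=|B|$ via the partition convention and the valid pair $(H,A)$, whereas the paper's final line simply writes $S=f^{-1}(A)$ without comment. What the paper's route buys is that the ``hard part'' you anticipate --- tracking whether $u_i^{*}$ lies in $V(H_M)$ and why $f(u_i^{*})\in N_H(v)$ --- evaporates: once you know there is no isolated edge, the degree equality $d_M=d_{H_M}$ on $S$ is immediate, with no need to match the abstract division data against the embedding. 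In fact your case analysis secretly reproves this: the pendant $v_i$ together with its neighbour $u_i^{*}$ is (in the specific division produced by Lemma~\ref{f}'s proof) exactly the isolated $M$-edge the paper finds.
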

\begin{proof}
		Let  $H_M$ be a copy of $H$ and there exist two independent sets $Y_1$ and $Y_2$ such that $H_M(t)\subseteq (M\cup Y_1)\vee Y_2$ holds. 	
	By Proposition \ref{p} (iv), we have $S\subseteq V(H_M)$. Suppose $e$ is an edge of $H_M$. If $e\in E(M)$, then $e$ is covered by $S$. If $e$ is between $Y_1$ and $Y_2$, there is an isolated edge in $M$, a contradiction.
	If $e$ is between $M$ and $Y_2$, then let $xy=e$, $x\in V(M)$, $y\in Y_2$. Since there is no isolated edge in $M$, there is an edge $xy''\in E(M)$ to be used to expand $e$ into an odd cycle, where $N_M(y'')=\{x\}$, then we have $x\in S$. Hence the edge $e$ is covered by $S$. Therefore, $S$ is an independent covering of $H_M$ and then $\lvert A\rvert =\tilde{q}(H)$ holds. 
	
	Suppose $w$ is a vertex in  $S$. If $d_M(w)<d_{H_M}(w)$, $N_{H_M}(w)$ contains a vertex $w'$ in $Y_2$. The edge $ww'$ can be expanded into an odd cycle by using an edge $e$ in $M$ and $e$ is in $H_{M}(t)-V(H_M)$. The fact that the new vertices of the odd cycles are all distinct implies $e$ is an isolated edge in $M$, a contradiction. If  $d_M(w)>d_{H_M}(w)$, then $M$ contains $d_M(w)-d_{H_M}(w)$ edges not in $H_M(t)$, a contradiction to the minimality of $M$. So $d_M(w)=d_{H_M}(w)$ holds. As $w$ is arbitrary, we have min $\{d_H(x):x\in A\}=$ min $\{d_{H_M}(x):x\in f^{-1}(A)\}=$ min $\{d_M(x):x\in S\}=k(H)$.
			\end{proof}

	\textbf{Example 1} 
	For the star $S_a$, by Lemma \ref{f}, we have $\mathcal{M}_2(S_a(t))\subseteq \mathcal{D}(S_a)$. On the other hand, for any $M\in \mathcal{D}(S_a)$, we have  $M=S_x\cup (a-x)P_2$ ($1\leq x\leq a$). $Y_1$ and $Y_2$ are two independent sets and large enough. In the graph $(M\cup Y_1) \vee Y_2$,
	by using the vertices of $Y_1$ and $Y_2$, different edges of $M$ can be used to expand  different odd cycles of $S_a(t)$. Since $e(M)=e(S_a)$, we have $S_a(t)\subseteq (M\cup Y_1) \vee Y_2$ and  $M\in \mathcal{M}_2(S_a(t))$. As $M$ is arbitrary, we  have $ \mathcal{D}(S_a) \subseteq\mathcal{M}_2(S_a(t))$. Hence we have $\mathcal{M}_2(S_a(t))= \mathcal{D}(S_a)$.
	Then we may imply  $\tilde{q}(S_a)=1$, $\tilde{\mathcal{S}}(S_a)$ is the center vertex of the star and $k(S_a)=a$. For any graph $M\in \mathcal{M}_2(S_a(t))$, $\beta(M)\geq \tilde{q}(S_a)$, hence we have $\tilde{\mathcal{B}}(S_a)=\{K_1\}$.

	\textbf{Example 2}
	For the path $P_{m+1}$, by Lemma \ref{f}, $\mathcal{M}_2(P_{m+1}(t))\subseteq \mathcal{D}(P_{m+1})$. On the other hand, for any $M\in \mathcal{D}(P_{m+1})$, $M$ is a union of some paths and $e(M)=m$. Set $Y_1$ and $Y_2$ be two independent sets and large enough. In the graph $(M\cup Y_1) \vee Y_2$, by using the vertices of $Y_1$ and $Y_2$, different edges of $M$ can be used to expanded different odd cycles of $P_{m+1}(t)$. Since $e(M)=m$, we have $P_{m+1}(t)\subseteq (M\cup Y_1) \vee Y_2$ and $M\in \mathcal{M}_2(P_{m+1}(t))$. Hence we have 
	$\mathcal{D}(P_{m+1})\subseteq \mathcal{M}_2(P_{m+1}(t))$. Therefore $\mathcal{D}(P_{m+1})= \mathcal{M}_2(P_{m+1}(t))$ holds.
	 When $m$ is even, $\tilde{q}(P_{m+1})=\frac{m}{2}$, $\tilde{\mathcal{S}}(P_{m+1})$ consists of the independent coverings of the graphs in $\mathcal{M}_2(P_{m+1}(t))$ such that each component of them is a path with even edges, and $k(P_{m+1})=2$. For any graph $M\in \mathcal{M}_2(P_{m+1}(t))$, $\beta(M)\geq \tilde{q}(P_{m+1})$, hence we have $\tilde{\mathcal{B}}(P_{m+1})=\{K_{\frac{m}{2}}\}$.
	When $m$ is odd,  we have $\tilde{q}(P_{m+1})=\frac{m+1}{2}$. Then $\tilde{\mathcal{S}}(P_{m+1})$ consists of the independent coverings of the graphs in $\mathcal{M}_2(P_{m+1}(t))$ such that each component of them is a path with even edges except one component is a path with odd edges and  $k(P_{m+1})=1$. For  any graph $M\in \mathcal{M}_2(P_{m+1}(t))$, $\beta(M)\geq \tilde{q}(P_{m+1})$, hence we have $\tilde{\mathcal{B}}(P_{m+1})=\{K_{\frac{m+1}{2}}\}$.


	As stars, paths and even cycles satisfy the conditions of Theorem \ref{d}, Theorem \ref{d} implies the Tur\'{a}n numbers for odd-ballooning of stars, paths and even cycles. 
\begin{corollary}[{Hou, Qiu and Liu~\cite[]{ref18}}]\label{n}
	If $n$ is sufficiently large and $t\geq 5$ is an odd integer, then  
	\begin{equation*}
		{\rm{ex}}(n,\,S_{a}(t))=e(T_2(n))+(a-1)^2
	\end{equation*}
	holds and the only extremal graph for $S_{a}(t)$ is the graph obtained from $T_2(n)$ by putting a copy of $K_{a-1,\,a-1}$ in one class of $T_2(n)$.
\end{corollary}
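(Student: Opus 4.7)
The plan is to specialize Theorem \ref{d} to $H = S_a$ and then certify the natural extremal candidate. From Example 1 we have $\tilde{q}(S_a) = 1$, $\tilde{\mathcal{B}}(S_a) = \{K_1\}$, and $k(S_a) = a$. Substituting these, and using $f(n,1) = e(I_0 \vee T_2(n)) = e(T_2(n))$ together with ${\rm ex}(0, \{K_1\}) = 0$, Theorem \ref{d} gives
\begin{equation*}
e(T_2(n)) \leq {\rm ex}(n, S_a(t)) \leq e(T_2(n)) + (a-1)^2,
\end{equation*}
and the family $\mathcal{F}(n, 1, a-1, \{K_1\})$ reduces to a single graph $G^*$, obtained by embedding a copy of $K_{a-1,a-1}$ into one color class of $T_2(n)$; note that $e(G^*) = e(T_2(n)) + (a-1)^2$. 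Once I verify that $G^*$ is $S_a(t)$-free, the ``Moreover'' clause of Theorem \ref{d} will immediately deliver both the value of ${\rm ex}(n, S_a(t))$ and the uniqueness of the extremal graph.

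The only nontrivial task, and hence the main obstacle, is this $S_a(t)$-freeness check. Let $V_1, V_2$ be the two color classes of $T_2(n)$ and let $W_1, W_2 \subseteq V_1$ (each of size $a-1$) be the bipartition of the embedded $K_{a-1,a-1}$. Since $G^* - E(K_{a-1,a-1}) = T_2(n)$ is bipartite, every odd cycle of $G^*$ must use at least one edge from the embedded $K_{a-1,a-1}$.

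Suppose, towards a contradiction, that $S_a(t) \subseteq G^*$ has center $c$ and odd cycles $C_1, \ldots, C_a$ that pairwise meet only at $c$. For each $i$, let $M_i$ be the (nonempty) set of $K_{a-1,a-1}$-edges used by $C_i$; then $V(M_i) \cap V(M_j) \subseteq \{c\}$ for $i \neq j$. Choose $k \in \{1,2\}$ so that $c \notin W_k$. Because every edge of $K_{a-1,a-1}$ has exactly one endpoint in each part, $|V(M_i) \cap W_k| \geq 1$ for each $i$, and the sets $V(M_i) \cap W_k$ are pairwise disjoint since $\{c\} \cap W_k = \emptyset$. Summing yields $a \leq |W_k| = a - 1$, the desired contradiction. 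Hence $G^*$ is $S_a(t)$-free, the upper bound in Theorem \ref{d} is attained, and its ``Moreover'' clause identifies $G^*$ as the unique extremal graph.
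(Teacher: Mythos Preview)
Your proof is correct and follows the same overall architecture as the paper's: plug the values from Example~1 into Theorem~\ref{d}, verify that the candidate graph $G^*$ is $S_a(t)$-free so that the upper bound is attained, and then invoke the ``Moreover'' clause for uniqueness. The only point of divergence is the freeness check itself. The paper argues via the decomposition family: since $G^* = (K_{a-1,a-1}\cup I_m)\vee I_m$, an embedded $S_a(t)$ would force $K_{a-1,a-1}$ to contain some $M = S_x \cup (a-x)P_2 \in \mathcal{M}_2(S_a(t))$, but $x + \alpha'(K_{a-1,a-1} - V(S_x)) = a-1 < a$, a contradiction. Your argument is more elementary and self-contained: each of the $a$ odd cycles must use a $K_{a-1,a-1}$-edge, and a pigeonhole count on the side $W_k$ of the embedded bipartite graph not containing the center forces $a \le |W_k| = a-1$. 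Your route avoids the decomposition-family machinery for this step and is arguably cleaner for the star case specifically; the paper's route has the virtue of illustrating how the $\mathcal{M}_2$ framework handles the lower bound in the same language used for the upper bound.
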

\begin{proof}
	Let $F$ be the graph obtained by putting a copy of $K_{a-1,\,a-1}$ in one class of $T_2(n)$. From Example 1, we have  $\mathcal{M}_2(S_a(t))=\{S_x\cup (a-x)P_2\;|\, 1\leq x\leq a\}$. If $S_{a}(t)\subseteq F$, then we have $S_{a}(t)\subseteq \left(K_{a-1,\,a-1}\cup I_m\right)\vee I_m$ where $m=\lceil\frac{n}{2} \rceil -2a$. So  $K_{a-1,\,a-1}$ contains a subgraph as a copy of a member of $\mathcal{M}_2(S_a(t))$. For any graph $M\in\mathcal{M}_2(S_a(t))$, $M=S_x\cup (a-x)P_2$ ($1\leq x\leq a$), we have $x+\alpha'(M-V(S_x))=a$.
	However,
	$x+\alpha'(K_{a-1,\,a-1}-V(S_x))=a-1<a$. So $F$ is $S_{a}(t)$-free and 
	\begin{equation*}
		{\rm {ex}}(n,\,S_{a}(t))\geq e(T_2(n))+(a-1)^2.
	\end{equation*}
	
	On the other hand, from Example 1, we have $\tilde{q}(S_a)=1$, $k(S_a)=a$ and $\tilde{\mathcal{B}}(S_a)=\{K_1\}$. By applying Theorem \ref{d}, we have
	\begin{equation*}
		{\rm {ex}}(n,\,S_{a}(t))\leq f(n,\,1)+(a-1)^2= e(T_2(n))+(a-1)^2.
	\end{equation*}
	Therefore, ${\rm {ex}}(n,\,S_{a}(t))=e(T_2(n))+(a-1)^2$ holds. Noting that $\mathcal{F}(n,1,a-1,K_1)=\{F\}$ holds, hence $F$ is the only extremal graph for $S_{a}(t)$.
\end{proof}
	
	\begin{corollary}[{Zhu, Kang and Shan~\cite[]{ref202}}]\label{c1}
		Let $n$ be a sufficiently large integer, $t$ be an odd integer at least $5$. We have the following:
		\begin{enumerate}[(i)]
			\item if $m$ is even, let $d=\frac{m}{2}$, then
			\begin{equation*}
				{\rm {ex}}(n,\,P_{m +1}(t))=e(T_2(n-d+1)\vee K_{d-1})+1
			\end{equation*}
			holds and the only extremal graph for $P_{m +1}(t)$ is the graph obtained from $T_2(n-d+1)\vee K_{d-1}$ by putting an edge in one class of $T_2(n-d+1)$.
			\item if $m$ is odd, let $d=\frac{m+1}{2}$, then
			\begin{equation*}
				{\rm {ex}}(n,\,P_{m +1}(t))=e(T_2(n-d+1)\vee K_{d-1})
			\end{equation*}
			holds and $T_2(n-d+1)\vee K_{d-1}$ is the unique extremal graph for $P_{m +1}(t)$.
		\end{enumerate}
	\end{corollary}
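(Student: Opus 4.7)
The plan is to specialize Theorem~\ref{d} to $H = P_{m+1}$, reading off the parameters from Example~2 and producing an explicit lower-bound construction in the even case. From Example~2, $\tilde{\mathcal{B}}(P_{m+1}) = \{K_d\}$ in both parities, so $\mathrm{ex}(d-1,\{K_d\}) = \binom{d-1}{2}$ (every graph on $d-1$ vertices is trivially $K_d$-free) and
\[
f(n,d) + \binom{d-1}{2} \;=\; e\bigl(I_{d-1}\vee T_2(n-d+1)\bigr) + e(K_{d-1}) \;=\; e\bigl(T_2(n-d+1)\vee K_{d-1}\bigr).
\]
Plugging this into Theorem~\ref{d} gives
\[
e\bigl(T_2(n-d+1)\vee K_{d-1}\bigr) \;\leq\; \mathrm{ex}(n,P_{m+1}(t)) \;\leq\; e\bigl(T_2(n-d+1)\vee K_{d-1}\bigr) + (k(P_{m+1})-1)^2.
\]

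For part~(ii), $m$ is odd and Example~2 yields $k(P_{m+1})=1$; the additive term vanishes, the two bounds coincide, and the desired equality follows immediately. The set $\mathcal{F}(n,d,0,\{K_d\})$ is the singleton $\{T_2(n-d+1)\vee K_{d-1}\}$ (the $K_{0,0}$ insertion is empty and the unique extremal $K_d$-free graph on $d-1$ vertices is $K_{d-1}$ itself), so the uniqueness clause of Theorem~\ref{d} identifies this as the unique extremal graph.

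For part~(i), $m$ is even and Example~2 gives $k(P_{m+1})=2$, so Theorem~\ref{d}'s upper bound exceeds its lower bound by~$1$. I would match the upper bound by exhibiting the graph $G^{\ast}$ obtained from $T_2(n-d+1)\vee K_{d-1}$ by inserting one extra edge $e^{\ast}$ inside one of the two partition classes of $T_2(n-d+1)$; note that $G^{\ast}$ is the (unique up to isomorphism) member of $\mathcal{F}(n,d,1,\{K_d\})$. Granted that $G^{\ast}$ is $P_{m+1}(t)$-free, we obtain a matching lower bound, and the equality clause of Theorem~\ref{d} identifies $G^{\ast}$ as the unique extremal graph.

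The main obstacle is the verification that $P_{m+1}(t) \not\subseteq G^{\ast}$. Following the template of Corollary~\ref{n}, I would argue that any embedding of $P_{m+1}(t)$ into $G^{\ast}$ induces, through the 2-decomposition framework (Lemma~\ref{f} and Definition~\ref{e}), a structure of the form $(M \cup I_s)\vee I_s \subseteq G^{\ast}$ realizing $P_{m+1}(t)$ with some $M \in \mathcal{M}_2(P_{m+1}(t)) = \mathcal{D}(P_{m+1})$. A careful case analysis of how the two independent sets $I_s$ can sit inside the bipartition classes $X,Y$ of $T_2(n-d+1)$ forces $M$ to live inside the ``non-join core'' $K_{d-1}\cup\{e^{\ast}\}$ on at most $d+1$ vertices. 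Since every $M \in \mathcal{D}(P_{m+1})$ is a vertex-disjoint union of paths with exactly $m = 2d$ edges, while any such union on $d+1$ vertices has at most $d$ edges, the inequality $d < 2d$ yields the required contradiction.
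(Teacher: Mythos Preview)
Your treatment of part~(ii) and the overall scaffolding for part~(i) (upper bound from Theorem~\ref{d}, lower bound via the explicit graph $G^\ast\in\mathcal{F}(n,d,1,\{K_d\})$) match the paper exactly. The gap is in your verification that $G^\ast$ is $P_{m+1}(t)$-free.

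The template of Corollary~\ref{n} does not transfer. In that corollary the host graph $F$ genuinely has the shape $(K_{a-1,a-1}\cup I)\vee I$, so an embedded $S_a(t)$ forces a member of $\mathcal{M}_2(S_a(t))$ into the small graph $K_{a-1,a-1}$. Here $G^\ast = K_{d-1}\vee T_2(n-d+1)$ plus $e^\ast$, and the $d-1$ ``core'' vertices are joined to \emph{both} classes $X$ and $Y$. If you try to write $G^\ast\subseteq (G'\cup I)\vee I$ with the two $I$'s inside $X,Y$, the smallest legal $G'$ is not $K_{d-1}\cup\{e^\ast\}$ but rather all of $(K_{d-1}\vee X)+e^\ast$, because the $K_{d-1}$--$X$ edges must be absorbed into $G'$. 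Thus your conclusion that the decomposition member $M$ ``must live inside the non-join core on $d+1$ vertices'' is unjustified; $M$ could a priori use many vertices of $X$, and your edge-count contradiction $d<2d$ never gets off the ground.

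The paper avoids the decomposition machinery entirely and argues directly: since $G^\ast-V(K_{d-1})-e^\ast$ is bipartite, every odd cycle of an embedded $P_{m+1}(t)$ must meet $V(K_{d-1})$ or use $e^\ast$. The $m$ odd cycles of $P_{m+1}(t)$ are edge-disjoint and any vertex lies in at most two of them (they intersect only at vertices of $P_{m+1}$, where $\Delta(P_{m+1})=2$). Hence at most $2(d-1)+1=m-1<m$ cycles can be accounted for, a contradiction. This two-line counting replaces your case analysis.
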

	\begin{proof}
	(i) When $m$ is even, let $F$ be obtained from $T_2(n-d+1)\vee K_{d-1}$ by putting an edge in one class of $T_2(n-d+1)$. In $F$, each vertex of $K_{d-1}$ is contained in at most two odd cycles of $P_{m+1}(t)$ and the edge in one class of $T_2(n-d+1)$ can be contained in only one odd cycle of $P_{m+1}(t)$. Therefore, in $F$, 
			the number of odd cycles in the odd-ballooning of  $P_{m+1}$ is at most $2(d-1)+1=m-1<m$. So $F$ is $P_{m+1}(t)$-free and 
			\begin{equation*}
				{\rm {ex}}(n,\,P_{m +1}(t))\geq e(T_2(n-d+1)\vee K_{d-1})+1=e(F).
			\end{equation*}
			On the other hand, from Example 2, we have $\tilde{q}(P_{m+1})=\frac{m}{2}$, $k(P_{m+1})=2$, $\tilde{\mathcal{B}}(P_{m+1})=\{K_{\frac{m}{2}}\}$. By applying Theorem \ref{d}, we have 
			\begin{equation*}
				{\rm {ex}}(n,\,P_{m+1}(t))\leq f'(n,\,d)+{\rm {ex}}(d-1,\,K_d)+1=e(T_2(n-d+1)\vee K_{d-1})+1.
			\end{equation*}
			Therefore ${\rm {ex}}(n,\,P_{m+1}(t))=e(T_2(n-d+1)\vee K_{d-1})+1$ holds. Noting that $\mathcal{F}(n,d,1,K_{d-1})=\{F\}$, hence  
			 $F$ is  the only extremal graph for $P_{m +1}(t)$.
			 
		(ii) When $m$ is odd, from Example 2, we have $\tilde{q}(P_{m+1})=\frac{m+1}{2}$, $k(P_{m+1})=1$, $\tilde{\mathcal{B}}(P_{m+1})=\{K_{\frac{m+1}{2}}\}$. By applying Theorem \ref{d}, the lower bound and the upper bound of the inequality in Theorem \ref{d} are the same, we have  
			$${\rm {ex}}(n,\,P_{m+1}(t))=f(n,\,d)+{\rm {ex}}(d-1,\,K_d)=e(T_2(n-d+1)\vee K_{d-1})$$ and  $T_2(n-d+1)\vee K_{d-1}$ is the unique extremal graph for $P_{m+1}(t)$.
	\end{proof}
	
	Using the similar arguments as the proof of Corollary \ref{c1} (i), we have the following corollary.
	\begin{corollary}[{Zhu, Kang and Shan~\cite[]{ref202}}]
		Given an even integer $m\geq4$, an odd integer $t\geq 5$ and  a sufficiently large integer $n$,  we have 
		${\rm {ex}}(n,\,C_{m}(t))=e(T_{2}(n-d+1)\vee K_{d-1})+1$ where $d=\frac{m}{2}$. The only extremal graph for $C_{m}(t)$ is the graph obtained from $T_{2}(n-d+1)\vee K_{d-1}$ by putting an edge in one class of $T_{2}(n-d+1)$.
	\end{corollary}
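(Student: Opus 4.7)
The plan is to follow the structure of Corollary~\ref{c1}(i), adjusting only the parameter computation (the analogue of Example~2 for $C_m$) and the counting argument in the lower bound. First, I would establish $\mathcal{M}_{2}(C_{m}(t)) = \mathcal{D}(C_{m})$: the inclusion $\mathcal{M}_{2}(C_{m}(t)) \subseteq \mathcal{D}(C_{m})$ is given by Lemma~\ref{f}, and for the reverse inclusion any $M \in \mathcal{D}(C_{m})$ is either $C_{m}$ itself or a disjoint union of paths with a total of $m$ edges, so mimicking the embedding in Example~2 one may place $C_{m}(t) \subseteq (M \cup Y_{1}) \vee Y_{2}$ by assigning each of the $m$ edges of $M$ to expand a distinct odd cycle. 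From this characterization, the minimum of $\gamma(M)$ over $M \in \mathcal{M}_{2}(C_{m}(t))$ is $d = m/2$, attained by $C_{m}$ itself (and by unions of paths whose components all have an even number of edges), and in each such minimizer every vertex of the covering has degree exactly $2$. Hence $\tilde{q}(C_{m}) = d$ and $k(C_{m}) = 2$. Moreover, since $\beta(M) \geq d$ for every $M \in \mathcal{M}_{2}(C_{m}(t))$, the convention in the definition of $\tilde{\mathcal{B}}$ gives $\tilde{\mathcal{B}}(C_{m}) = \{K_{d}\}$.

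For the lower bound, let $F$ be obtained from $T_{2}(n-d+1) \vee K_{d-1}$ by inserting one edge in a class of $T_{2}(n-d+1)$. I would show $F$ is $C_{m}(t)$-free by contradiction: a copy of $C_{m}(t) \subseteq F$ would consist of $m$ odd cycles of length at least $t \geq 5$, glued along the $C_{m}$ skeleton. Since $T_{2}(n-d+1)$ is bipartite, every odd cycle in $F$ must either pass through a vertex of $K_{d-1}$ or use the added edge. The added edge can lie in at most one such odd cycle, because the internal vertices of different ballooning odd cycles are pairwise disjoint. Each vertex of $K_{d-1}$ has degree at most $4$ in $C_{m}(t)$, so it appears in at most $2$ of the $m$ odd cycles. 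Therefore at most $2(d-1) + 1 = m - 1 < m$ odd cycles can be realized inside $F$, a contradiction. Thus ${\rm {ex}}(n,\,C_{m}(t)) \geq e(F) = e(T_{2}(n-d+1) \vee K_{d-1}) + 1$.

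Finally, applying Theorem~\ref{d} with the parameters $\tilde{q}(C_{m}) = d$, $k(C_{m}) = 2$, $\tilde{\mathcal{B}}(C_{m}) = \{K_{d}\}$ yields the matching upper bound
\[
{\rm {ex}}(n,\,C_{m}(t)) \leq f(n,\,d) + {\rm {ex}}(d-1,\,K_{d}) + (k(C_{m}) - 1)^{2} = e(T_{2}(n-d+1) \vee K_{d-1}) + 1,
\]
so equality holds throughout. The uniqueness clause of Theorem~\ref{d} then identifies the extremal family as $\mathcal{F}(n,\,d,\,1,\,K_{d-1})$, which by inspection consists of the single graph $F$. The step I expect to be most delicate is the counting in the lower bound, specifically justifying cleanly that each vertex of $K_{d-1}$ lies in at most two ballooning odd cycles and that the added edge lies in at most one; the parameter computations for $C_m$ are a direct extension of the even-path case treated in Example~2.
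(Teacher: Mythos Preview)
Your proposal is correct and follows precisely the route the paper intends: the paper does not write out a separate proof for $C_m(t)$ but simply states that the argument is the same as Corollary~\ref{c1}(i), and what you have supplied is exactly that argument with the parameters recomputed for $C_m$ in place of $P_{m+1}$. Your determination of $\tilde q(C_m)=d$, $k(C_m)=2$, $\tilde{\mathcal B}(C_m)=\{K_d\}$ via $\mathcal M_2(C_m(t))=\mathcal D(C_m)$ mirrors Example~2, and your lower-bound count (each vertex of $K_{d-1}$ lies in at most two ballooning odd cycles since $\Delta(C_m(t))=4$, and the extra edge lies in at most one since distinct odd cycles of $C_m(t)$ share at most one vertex) is the same mechanism used for the even-path case.
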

	
	Let $T$ be a tree and $V(T)=A\cup B$. An odd-ballooning $T(t)(t\geq 3)$ of $T$ is good if all edges which are expanded into triangles are the edges who have one endpoint with degree one and the non-leaf vertices are in $A$.
	Recently, Zhu \cite{ref221} gave the exact value of ex$(n,T(t))(t\geq 3)$ when $T(t)$ is a good odd-ballooning of $T$. As $\mathcal{M}_2(T(t))\subseteq \mathcal{D}(T)$, when $T(t)$ is a good odd-ballooning, we have $\tilde{q}(T)=\lvert A\rvert$. Hence if $A$ has a vertex with degree one in $T$, we have the following corollary.
	
	\begin{corollary}[{Zhu and Chen~\cite[]{ref221}}]
		Let $T$ be a tree with $V(T)=A\cup B$ and $a=\lvert A\rvert$. Suppose $T(t)$ is a good odd-ballooning of  $T$ where $t\geq 5$ is an odd cycle. If $A$ contains a vertex $u$ with $d_T(u)=1$, then
		\begin{equation*}
			{\rm{ex}}(n,T(t))=f(n,a)+{\rm{ex}}(a-1,\tilde{\mathcal{B}}(T)).
		\end{equation*}
	Moreover the extremal graphs for $T(t)$ are in $\mathcal{F}(n,a,0,\tilde{\mathcal{B}}(T))$.
	\end{corollary}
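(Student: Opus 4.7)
The plan is to instantiate Theorem \ref{d} with the parameters $\tilde{q}(T)=a$ and $k(T)=1$; once these two equalities are in hand, the upper and lower bounds of Theorem \ref{d} coincide, yielding the claimed value of $\mathrm{ex}(n,T(t))$, and the ``moreover'' clause then pins down the extremal family as $\mathcal{F}(n,a,0,\tilde{\mathcal{B}}(T))$. The equality $\tilde{q}(T)=a$ has already been asserted in the paragraph preceding the corollary, as a direct consequence of Lemma \ref{f} together with the goodness hypothesis, so the substantive task is to establish $k(T)=1$.

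To prove $k(T)=1$ I would exhibit the concrete pair $(M,S)=(T,A)$ as a witness. For this I first verify that $T\in \mathcal{M}_2(T(t))$. Place a copy $H_M$ of $T$ on the vertex set $V(T)$ and, for each edge $xy\in E(T)$, build the odd cycle of length $\ell_{xy}\ge t$ expanding $xy$ by taking $xy$ itself as the unique $M$-edge permitted by Proposition \ref{p}(ii) and adjoining an internally disjoint $x$--$y$ path of length $\ell_{xy}-1$ whose inner vertices alternate between two sufficiently large independent sets $Y_2$ and $Y_1$ (beginning and ending with a vertex in $Y_2$, which is consistent with $\ell_{xy}$ being odd). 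Each edge of such a path has at least one endpoint in $Y_2$, so it is present in $(T\cup Y_1)\vee Y_2$; allocating fresh inner vertices to different edges of $T$ makes the cycles internally vertex-disjoint. Minimality of $T$ in $\mathcal{M}_2(T(t))$ then follows from Proposition \ref{p}(i)--(ii): any proper subgraph of $T$ has fewer than $e(T)$ edges, so it cannot supply one distinct $M$-edge to each of the $e(T)$ expanded cycles.

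Now Lemma \ref{i} gives $\gamma(T)=|A|=a=\tilde{q}(T)$, so $T\in \mathcal{N}(T(t))$ and $A\in \tilde{\mathcal{S}}(T)$; evaluating the definition of $k(T)$ at the pair $(T,A)$ and at the hypothesised leaf $u\in A$ yields $k(T)\le d_T(u)=1$, and since degrees in $M$ are at least one this forces $k(T)=1$. Substituting $\tilde{q}(T)=a$ and $k(T)=1$ into Theorem \ref{d} collapses its interval to
\[
\mathrm{ex}(n,T(t))=f(n,a)+\mathrm{ex}(a-1,\tilde{\mathcal{B}}(T)),
\]
and the ``moreover'' clause of Theorem \ref{d} identifies $\mathcal{F}(n,a,0,\tilde{\mathcal{B}}(T))$ as the extremal family. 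The main obstacle I foresee is the embedding step in the second paragraph: one needs to manage the interior vertices carefully so that distinct cycles do not collide and so that every path edge genuinely lies in $(T\cup Y_1)\vee Y_2$; since $|Y_1|$ and $|Y_2|$ may be taken arbitrarily large and the parity constraints along each path are forced, this amounts to bookkeeping rather than a serious combinatorial difficulty.
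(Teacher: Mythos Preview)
Your proposal is correct and follows precisely the route the paper intends: the corollary is stated without proof, but the preceding sentence asserts $\tilde{q}(T)=|A|$, and once one observes (as you do) that $T$ itself lies in $\mathcal{M}_2(T(t))$ with independent cover $A$, the hypothesis $d_T(u)=1$ forces $k(T)=1$, whereupon the two bounds in Theorem~\ref{d} coincide and the ``moreover'' clause yields the extremal family $\mathcal{F}(n,a,0,\tilde{\mathcal{B}}(T))$. Your explicit cycle-embedding of $T(t)$ in $(T\cup Y_1)\vee Y_2$ and the minimality check via Proposition~\ref{p}(i) supply exactly the details the paper omits.
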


	\section{Proof of Theorem \ref{d}}
	This section is devoted to the proof of Theorem \ref{d}. 
    First, we prove the lower bound of Theorem \ref{d}.
	
	\begin{lemma}\label{l}
		Let  $H$ be a bipartite graph, $n$ be a sufficiently large integer, $t\geq 5$ be an odd integer, then we have
		\begin{equation*}
			{\rm {ex}}(n,\,H(t))\geq f(n,\,\tilde{q}(H)) +{\rm {ex}}(\tilde{q}(H)-1,\,\tilde{\mathcal{B}}(H)).
		\end{equation*}
	\end{lemma}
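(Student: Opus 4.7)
The plan is to exhibit an explicit construction attaining the bound. I would take $G^{\ast} \in \mathcal{F}(n, \tilde{q}(H), 0, \tilde{\mathcal{B}}(H))$: starting from $F(n, \tilde{q}(H)) = I_{\tilde{q}(H)-1} \vee T_{2}(n - \tilde{q}(H) + 1)$, place a copy of an extremal $\tilde{\mathcal{B}}(H)$-free graph $G^{\ast}_{W}$ on the independent set of size $\tilde{q}(H) - 1$. By construction
\begin{equation*}
e(G^{\ast}) = f(n, \tilde{q}(H)) + {\rm ex}(\tilde{q}(H) - 1, \tilde{\mathcal{B}}(H)),
\end{equation*}
so the entire content of the lemma is the claim that $G^{\ast}$ is $H(t)$-free.

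To prove this by contradiction, suppose an embedding $\phi$ realises $H(t) \subseteq G^{\ast}$. Write $W$ for the $I_{\tilde{q}(H)-1}$ part (carrying $G^{\ast}_{W}$) and $X_{1}, X_{2}$ for the two classes of $T_{2}(n-\tilde{q}(H)+1)$, and set $V_{W} = \phi^{-1}(W)$, $V_{i} = \phi^{-1}(X_{i})$; both $V_{1}$ and $V_{2}$ are independent in $H(t)$. Define $V_{1}' \subseteq V_{1}$ to be the vertices of $V_{1}$ with at least one $H(t)$-neighbour in $V_{W}$, and let $M := H(t)[V_{W} \cup V_{1}']$. Sending $V_{W} \cup V_{1}' \to V(M)$, $V_{1}\setminus V_{1}' \to$ the first copy of $I_{\ell}$ and $V_{2} \to$ the second copy of $I_{\ell}$ (for $\ell \geq |V(H(t))|$), every edge of $H(t)$ survives the map: edges inside $V_{W} \cup V_{1}'$ lie in $M$, there is no edge between $V_{W}$ and $V_{1}\setminus V_{1}'$ by the definition of $V_{1}'$, and all remaining edges land inside the complete bipartite join. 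Hence $H(t) \subseteq (M \cup I_{\ell}) \vee I_{\ell}$.

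Next, I would pick a subgraph $M_{0} \subseteq M$ that is minimal with the property $H(t) \subseteq (M_{0} \cup I_{\ell'}) \vee I_{\ell'}$ for some $\ell'$; by minimality $M_{0} \in \mathcal{M}_{2}(H(t))$. Every edge of $M$ is incident with $V_{W}$ (either it lies inside $V_{W}$ or it runs from $V_{W}$ to $V_{1}'$), so $C := V_{W} \cap V(M_{0})$ is a vertex cover of $M_{0}$ with $|C| \leq |V_{W}| = \tilde{q}(H) - 1 < \tilde{q}(H)$. If $M_{0}[C]$ contains at least one edge, then $M_{0}[C] \in \tilde{\mathcal{B}}(H)$ by the definition of the subgraph covering family, and $\phi$ embeds $M_{0}[C]$ as a subgraph of $G^{\ast}[W] = G^{\ast}_{W}$, contradicting the $\tilde{\mathcal{B}}(H)$-freeness of $G^{\ast}_{W}$. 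If $M_{0}[C]$ is edgeless, then $C$ is an independent vertex cover of $M_{0}$, which forces $M_{0}$ to be bipartite with $\gamma(M_{0}) \leq |C| < \tilde{q}(H)$, contradicting $\tilde{q}(H) = \min\{\gamma(L) : L \in \mathcal{M}_{2}(H(t)) \text{ bipartite}\}$.

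The main obstacle is the design of the auxiliary graph $M$: one has to absorb \emph{exactly} the slice $V_{1}'$ of $V_{1}$ into $M$ so that the residue $V_{1}\setminus V_{1}'$ can play the role of the independent ``disjoint-union'' side of the 2-decomposition $(M \cup I_{\ell}) \vee I_{\ell}$ without creating forbidden cross-edges to $V(M)$. After that step, passing to a minimal $M_{0} \in \mathcal{M}_{2}(H(t))$ by edge-minimisation and carrying out the edge/edgeless split on $M_{0}[C]$ are routine consequences of the definitions of $\tilde{q}(H)$ and $\tilde{\mathcal{B}}(H)$.
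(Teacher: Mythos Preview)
Your proof is correct and follows essentially the same strategy as the paper's: exhibit $G^{\ast}\in\mathcal{F}(n,\tilde q(H),0,\tilde{\mathcal B}(H))$, and from a hypothetical embedding of $H(t)$ extract a member $M_0\in\mathcal M_2(H(t))$ covered by at most $\tilde q(H)-1$ vertices lying in the $Q$-part, which contradicts either the definition of $\tilde{\mathcal B}(H)$ or that of $\tilde q(H)$. The paper streamlines your construction of $M$ by observing directly that $G^{\ast}=(Q\vee I_m)\vee I_m\subseteq((Q\vee I_m)\cup I_m)\vee I_m$, so that $Q\vee I_m$ itself must contain some $M\in\mathcal M_2(H(t))$ with $V(Q)$ as a cover---this bypasses your careful isolation of $V_1'$---and phrases the final dichotomy as $\beta(M)<\tilde q(H)$ versus $\beta(M)\ge\tilde q(H)$ rather than your edge/edgeless split on $M_0[C]$, but the content is identical.
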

	
	\begin{proof}
		If there is an $H(t)\subseteq F \in  \mathcal{F}(n,\,\tilde{q}(H),\,0,\,\tilde{\mathcal{B}}(H))$ i.e.,
		\begin{equation*}
			\begin{split}
				H(t)\subseteq (Q\vee I_m)\vee I_m\subseteq ((Q\vee I_m)\cup I_m)\vee I_m,
			\end{split}
		\end{equation*}
		where $Q\in {\rm {EX}}(\tilde{q}(H)-1,\,\tilde{\mathcal{B}}(H))$ and $m=\lceil \frac{n-\tilde{q}(H)+1}{2}\rceil $.
		By the definition of $\mathcal{M}_2(H(t))$,
		$(G\cup I_m)\vee I_m$ contains a copy of $H(t)$, then $G$ contains a subgraph as a member of  $\mathcal{M}_2(H(t))$.
		Thus $Q\vee I_m$ contains a subgraph as a member, say $M$ of $\mathcal{M}_2(H(t))$. Since $I_m$ is an independent set, $Q$ contains a subgraph induced by a covering of $M$. However, when $\beta(M)<\tilde{q}(H)$ holds,  since the graphs in ${\rm {EX}}(\tilde{q}(H)-1,\,\tilde{\mathcal{B}}(H))$ are $\tilde{\mathcal{B}}(H)$-free, $Q$ contains no subgraph induced by a covering of $M$; when  $\beta(M)\geq \tilde{q}(H)$, since the order of $Q\in {\rm {EX}}(\tilde{q}(H)-1,\,\tilde{\mathcal{B}}(H))$ is $\tilde{q}(H)-1$, $Q$ contains no subgraph induced by a covering of $M$.
		This contradiction shows any graph in  $ \mathcal{F}(n,\,\tilde{q}(H),\,0,\,\tilde{\mathcal{B}}(H))$ is $H(t)$-free.
	
	Note that for any $F$ in $ \mathcal{F}(n,\,\tilde{q}(H),\,0,\,\tilde{\mathcal{B}}(H))$, we have $e(F)=f(n,\tilde{q}(H))+{\rm {ex}}(\tilde{q}(H)-1,\,\tilde{\mathcal{B}}(H))$. 
	 Therefore, ${\rm {ex}}(n,\,H(t))\geq f(n,\,\tilde{q}(H)) +{\rm {ex}}(\tilde{q}(H)-1,\,\tilde{\mathcal{B}}(H))$ holds.
\end{proof}
	To prove Theorem \ref{m}, we need the following lemmas. $c(H)$ is the number of components of $H$.
	\begin{lemma}[{Hou, Qiu and Liu~\cite[]{ref16}}]\label{j}
		Let $H$ be a graph with no isolated vertex. If $\Delta (H)\leq 2$, then 
		\begin{equation*}
			\alpha' (H)\geq \frac{\nu (H)-c(H)}{2}.
		\end{equation*}
	\end{lemma}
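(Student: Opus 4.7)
The plan is to exploit the very restrictive structure of graphs with $\Delta(H)\leq 2$. First I would argue that any such graph with no isolated vertex has every vertex of degree $1$ or $2$, so each connected component, being connected and $2$-regular or a subgraph thereof, is forced to be either a path $P_{k}$ with $k\geq 2$ or a cycle $C_{k}$ with $k\geq 3$. This reduces the whole statement to a per-component verification, since both $\alpha'$ and $\nu$ are additive over disjoint unions and $c(H)$ is simply the number of summands.

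Next I would establish the inequality $\alpha'(H_i)\geq \tfrac{\nu(H_i)-1}{2}$ for each component $H_i$. On a path $P_k$, taking every other edge starting from one end produces a matching of size $\lfloor k/2\rfloor$, which is at least $(k-1)/2$. On a cycle $C_k$, the same alternating construction yields a matching of size $\lfloor k/2\rfloor\geq (k-1)/2$; for even $k$ this is a perfect matching and for odd $k$ it leaves exactly one vertex uncovered. In either case the deficiency from the ideal $\nu(H_i)/2$ is at most $1/2$, which is exactly what the bound asks for.

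Finally, summing the component inequalities gives
\begin{equation*}
\alpha'(H)=\sum_{i=1}^{c(H)}\alpha'(H_i)\geq \sum_{i=1}^{c(H)}\frac{\nu(H_i)-1}{2}=\frac{\nu(H)-c(H)}{2},
\end{equation*}
which is the claim. I do not anticipate a genuine obstacle here: the only content of the lemma is the elementary classification of graphs with $\Delta\leq 2$ together with the observation that a maximum matching in a path or cycle leaves at most one vertex unsaturated. The bookkeeping is clean because each of the two conditions ``loses at most one vertex'' and ``contributes exactly one to $c(H)$'' line up perfectly.
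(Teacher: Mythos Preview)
Your argument is correct and complete: the classification of connected graphs with $\Delta\leq 2$ as paths or cycles, the explicit matching bound $\alpha'(H_i)\geq \lfloor \nu(H_i)/2\rfloor \geq (\nu(H_i)-1)/2$ on each component, and the additivity of $\alpha'$, $\nu$, and $c$ over disjoint unions together yield the lemma with no gaps.

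There is nothing to compare against in the present paper: Lemma~\ref{j} is quoted from Hou, Qiu and Liu~\cite{ref16} without proof and used only as an ingredient in the proof of Claim~3 within Theorem~\ref{m}. Your self-contained verification is exactly the kind of elementary argument one would expect for this auxiliary fact.
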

	
	\begin{lemma}[{Hou, Qiu and Liu~\cite[]{ref16}}]\label{k}
		Let $H$ be a graph with no isolated vertex. If for all $x\in V(H)$, $d(x)+	\alpha' (H-N[x])\leq k$, then $e(H)\leq k^2$. Moreover, the equality holds if and only if $H=K_{k,\,k}$.
	\end{lemma}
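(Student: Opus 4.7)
The plan is to reduce the edge bound to the elementary inequality $e(H)\le\tau(H)\Delta(H)$, where $\tau(H)$ is the vertex cover number, by using the hypothesis to control $\tau(H)$ via a cleverly chosen cover.

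The key observation is that for any fixed vertex $x$, if $M_x$ is a maximum matching of $H-N[x]$, then $S_x:=N(x)\cup V(M_x)$ is a vertex cover of $H$. Indeed, every edge of $H$ incident to $x$ has its other endpoint in $N(x)$; any remaining edge either already meets $N(x)$, or lies entirely in $V(H)\setminus N[x]$, in which case maximality of $M_x$ forces one of its endpoints into $V(M_x)$. Hence $\tau(H)\le|S_x|\le d(x)+2\alpha'(H-N[x])$. Choosing $x$ with $d(x)=\Delta(H)=:\Delta$ and invoking the hypothesis gives $\tau(H)\le 2k-\Delta$, and since each edge meets the cover while each cover vertex has at most $\Delta$ incident edges,
\[
e(H)\le\tau(H)\,\Delta\le(2k-\Delta)\Delta=k^{2}-(k-\Delta)^{2}\le k^{2}.
\]

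For the equality characterization, $e(H)=k^{2}$ forces $\Delta=k$, $\tau(H)=k$, and saturation of $e(H)\le\tau(H)\Delta$. A degree-sum over any minimum cover $C$ then yields $e(H[C])=0$, so $C$ is independent and every $v\in C$ has exactly $k$ neighbors, all in $V\setminus C$. Applying the hypothesis at any $x\in C$, where $d(x)=k$ already forces $\alpha'(H-N[x])=0$, makes $H-N[x]$ edgeless; since $C$ and $V\setminus C$ are both independent, this eliminates all possible edges between $C\setminus\{x\}$ and $(V\setminus C)\setminus N(x)$ and therefore forces $N(y)=N(x)=:B$ for every $y\in C$. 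One then obtains $H\supseteq K_{k,k}$ on $C\cup B$, and the no-isolated-vertex hypothesis rules out any vertex of $V(H)\setminus(C\cup B)$, yielding $H=K_{k,k}$.

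The inequality falls out cleanly once the cover $S_x$ is identified; the main obstacle will be the equality case. Specifically, establishing that every cover vertex shares the common neighborhood $B$ requires applying the hypothesis at each $x\in C$ (not just at a single vertex of maximum degree), and the final step of eliminating extraneous vertices rests essentially on the no-isolated-vertex assumption.
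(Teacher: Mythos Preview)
The paper does not prove this lemma; it is quoted from Hou, Qiu and Liu~\cite{ref16} and used as a black box in the proof of Theorem~\ref{m}. So there is no in-paper argument to compare against.

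That said, your proof is correct. The cover $S_x=N(x)\cup V(M_x)$ is indeed a vertex cover (maximality of $M_x$ handles edges inside $H-N[x]$), which gives $\tau(H)\le d(x)+2\alpha'(H-N[x])\le 2k-\Delta$ at a vertex of maximum degree, and then $e(H)\le\tau(H)\Delta\le(2k-\Delta)\Delta\le k^{2}$ is immediate. For the equality analysis, your chain forces $\Delta=k$ and $\tau(H)=k$; the degree-sum identity $\sum_{v\in C}d(v)=e(H)+e(H[C])$ over a minimum cover $C$ then yields $e(H[C])=0$ and $d(v)=k$ for every $v\in C$. Applying the hypothesis at each $x\in C$ to get $\alpha'(H-N[x])=0$ and hence $N(y)\subseteq N(x)$ for all $y\in C$ (so $N(y)=N(x)$ by cardinality) is clean, and the no-isolated-vertex assumption disposes of any vertex outside $C\cup N(x)$ exactly as you say. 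You might add one sentence noting the easy converse, that $K_{k,k}$ itself satisfies $d(x)+\alpha'(H-N[x])=k+0=k$ for every vertex, to close the ``if and only if'' --- but this is routine.
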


Define $\varphi(\alpha',\,\Delta)$=max$\{e(H):\alpha' (H)\leq \alpha',\,\Delta(H)\leq \Delta\}$. Chv\'{a}tal and Hanson \cite{ref76} proved the following theorem which is useful to estimate the number of edges of a graph with restricted degrees and matching number.
\begin{lemma}[{Chv\'atal and Hanson~\cite[]{ref76}}]\label{c}
	 For any graph $H$ with maximum degree $\Delta \geq 1$ and matching number $\alpha'\geq 1$, we have 
	\begin{equation*}
		e(H)\leq \varphi(\alpha' ,\,\Delta)=\alpha' \Delta +\Big {\lfloor} \frac{\Delta}{2}\Big {\rfloor} \Big{\lfloor} \frac{\alpha'}{\lceil \Delta/2 \rceil} \Big{\rfloor} \leq \alpha'\Delta+\alpha'.
	\end{equation*}
\end{lemma}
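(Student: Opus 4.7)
The plan is to treat the two inequalities in Lemma \ref{c} separately. For the clean right-hand bound $\varphi(\alpha',\Delta)\le\alpha'\Delta+\alpha'$ I would invoke Vizing's edge-coloring theorem: since $\chi'(H)\le\Delta(H)+1\le\Delta+1$, partition $E(H)$ into $\Delta+1$ matchings $M_1,\ldots,M_{\Delta+1}$, each of size at most $\alpha'(H)\le\alpha'$, and sum to obtain $e(H)\le(\Delta+1)\alpha'$. Since only this weaker form is what gets used later in the paper, a reader interested merely in the applications could stop here.

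For the sharp formula $\varphi(\alpha',\Delta)=\alpha'\Delta+\lfloor\Delta/2\rfloor\lfloor\alpha'/\lceil\Delta/2\rceil\rfloor$, tightness first follows from an explicit construction: take $k=\lfloor\alpha'/\lceil\Delta/2\rceil\rfloor$ disjoint copies of $K_{\Delta+1}$ together with a smaller piece absorbing the residual matching $\alpha'-k\lceil\Delta/2\rceil$. Each $K_{\Delta+1}$ contributes $\lceil\Delta/2\rceil$ to the matching number and $\binom{\Delta+1}{2}=\lceil\Delta/2\rceil\cdot\Delta+\lfloor\Delta/2\rfloor$ edges, reproducing the claimed expression. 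For the matching upper bound, I would choose a maximum matching $M$ in $H$ and set $U=V(H)\setminus V(M)$; maximality of $M$ forces $U$ to be independent and every edge of $H$ to touch $V(M)$, so double-counting degrees in $V(M)$ yields
\begin{equation*}
2\,e(H[V(M)])+e(V(M),U)\le \Delta\cdot|V(M)|=2\alpha'\Delta,
\end{equation*}
which combined with $e(H[V(M)])\ge|M|=\alpha'$ provides a first (rough) estimate. The exact correction would then be obtained by induction on $\alpha'$: identify an extremal $K_{\Delta+1}$-component, peel it off, and recurse on the graph whose matching number is $\alpha'-\lceil\Delta/2\rceil$ and whose maximum degree is still at most $\Delta$.

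The hard part will be pinning down the integer correction $\lfloor\Delta/2\rfloor\lfloor\alpha'/\lceil\Delta/2\rceil\rfloor$ rather than only a $O(\Delta)$-loose version of it: the bare double-counting bound loses a constant whenever $\alpha'$ is not a multiple of $\lceil\Delta/2\rceil$, so one must exploit the absence of augmenting paths for $M$ in $H$ to forbid more spread-out extremal configurations and force a clique to appear that can actually be peeled off. If the inductive peeling argument turns out to be awkward at the boundary cases $\alpha'\equiv r\pmod{\lceil\Delta/2\rceil}$ with $0<r<\lceil\Delta/2\rceil$, the fallback would be to invoke the Gallai--Edmonds decomposition of $H$, apply the bound on each factor-critical piece, and aggregate; this is essentially the route of the original Chv\'atal--Hanson argument.
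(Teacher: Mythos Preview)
The paper does not prove Lemma~\ref{c} at all: it is quoted verbatim as the Chv\'atal--Hanson theorem with a citation to \cite{ref76}, and the text moves immediately on to Theorem~\ref{m}. So there is no ``paper's own proof'' to compare your sketch against; any argument you supply here already goes beyond what the authors do.

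Two comments on the content of your sketch. First, your Vizing argument for the weak bound $e(H)\le(\Delta+1)\alpha'$ is correct and self-contained, but your parenthetical remark that ``only this weaker form is what gets used later in the paper'' is not accurate. In Claim~3 of the proof of Theorem~\ref{m} the authors need the \emph{exact} value of $\varphi(k(H)-1,\,k(H)-2)$ to conclude that it is strictly less than $(k(H)-1)^2$ for all $k(H)\neq4$; the coarse bound $(k(H)-1)^2$ from Vizing would give only a non-strict inequality and would not separate the $k(H)=4$ case that they handle by hand. Second, your tightness construction has an arithmetic slip: the identity $\binom{\Delta+1}{2}=\lceil\Delta/2\rceil\cdot\Delta+\lfloor\Delta/2\rfloor$ fails for odd $\Delta$ (take $\Delta=3$: the left side is $6$, the right side is $7$). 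For odd $\Delta$ the extremal configurations are not simply unions of $K_{\Delta+1}$'s, and your peeling induction would have to account for this; the Gallai--Edmonds route you mention as a fallback is indeed closer to what Chv\'atal and Hanson actually do.
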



	\begin{theorem}\label{m}
		Let $n,\;b$ be sufficiently large positive integers, $H$ be a bipartite graph.
		Let $G$ be a  graph of order $n$ with a partition of vertices into three parts $V(G)=V_0\cup V_1\cup V_2$ satisfying the following conditions:
		\begin{enumerate}[(i)]
			\item there exist $V'_{1}\subseteq V_1$, $V'_{2}\subseteq V_2$ such that $G[V'_{1}\cup V'_{2}]=T_2(2b)$;
			\item $\lvert V_0\rvert =\tilde{q}(H)-1$ and each vertex of  $V_0$ is adjacent to each vertex of $T_2(2b)$;
			\item each vertex of $V''_i=V_i\backslash V'_i$  is adjacent to at least $c_0 \lvert V'_{3-i} \rvert $ ($\frac{1}{2}<c_0\leq 1$) vertices of $V'_{3-i}$, and is not adjacent to any vertex of $V'_{i}$ ($i=1,\,2$).
		\end{enumerate}
If $G$ is  $H(t)$-free, then $$e(G)\leq  f(n,\,\tilde{q}(H))+{\rm {ex}}(\tilde{q}(H)-1,\,\tilde{\mathcal{B}}(H))+(k(H)-1)^2$$ and the equality holds if and only if $G\in \mathcal{F}(n,\,\tilde{q}(H),\,k(H)-1,\,\tilde{\mathcal{B}}(H))$.
	\end{theorem}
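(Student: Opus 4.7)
The plan is to split the edges into
\begin{equation*}
e(G)=e(G[V_0])+e(V_0,V_1\cup V_2)+e(V_1,V_2)+e(G[V_1])+e(G[V_2]),
\end{equation*}
and observe that the middle two pieces sum to at most $f(n,\tilde q(H))$, immediately from $|V_0|=\tilde q(H)-1$ and the AM--GM bound $|V_1|\,|V_2|\le\lfloor(n-\tilde q(H)+1)^2/4\rfloor=e(T_2(n-\tilde q(H)+1))$. It then remains to prove the two substantive estimates
\begin{equation*}
e(G[V_0])\le \mathrm{ex}(\tilde q(H)-1,\tilde{\mathcal B}(H))\qquad\text{and}\qquad e(G[V_1])+e(G[V_2])\le (k(H)-1)^2,
\end{equation*}
each by contrapositive: if either fails I will extract some $M\in\mathcal M_2(H(t))$ inside $G$ and then use the scaffold $V_0\vee T_2(2b)$ to complete the configuration $(M\cup Y_1)\vee Y_2$ with $Y_1\subseteq V_1'$, $Y_2\subseteq V_2'$, obtaining $H(t)\subseteq G$ and a contradiction.

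For the bound on $e(G[V_0])$, suppose $G[V_0]$ contains some $J\in\tilde{\mathcal B}(H)$. Then $J=M[C]$ for an $M\in\mathcal M_2(H(t))$ and a covering $C$ of $M$ with $|C|<\tilde q(H)$, and $V(M)\setminus C$ is independent in $M$. Embed $C$ into $V_0$ so that $J$ lies in $G[V_0]$, and embed $V(M)\setminus C$ as distinct fresh vertices of $V_1'$. By conditions (i) and (ii) the complete join $V_0\vee V_1'$ automatically supplies every $M$-edge from $C$ to $V(M)\setminus C$, so $M\subseteq G$. Now take disjoint independent sets $Y_1\subseteq V_1'$ and $Y_2\subseteq V_2'$, both large enough and disjoint from $V(M)$; since $V_0\vee V_2'$ and $V_1'\vee V_2'$ are complete joins, $Y_2$ is completely joined to $M\cup Y_1$, and the definition of $\mathcal M_2(H(t))$ produces a copy of $H(t)$ in $(M\cup Y_1)\vee Y_2\subseteq G$, contradiction. (If $\tilde{\mathcal B}(H)=\{K_{\tilde q(H)}\}$ the bound is immediate from $|V_0|=\tilde q(H)-1$.)

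For the bound on $G^*:=G[V_1]\cup G[V_2]$, apply Lemma \ref{k}: it suffices to verify $d_{G^*}(x)+\alpha'(G^*-N[x])\le k(H)-1$ for every vertex $x$, with equality throughout forcing $G^*=K_{k(H)-1,k(H)-1}$. Suppose not, say $x\in V_1$ with $d_{G^*}(x)+\alpha'(G^*-N[x])\ge k(H)$. By the definition of $k(H)$ choose $M_0\in\mathcal M_2(H(t))$ with independent covering $S_0\in\tilde{\mathcal S}(H)$ containing some $v^*$ of degree $d_{M_0}(v^*)=k(H)$; by Lemma \ref{f} the graph $M_0$ is a vertex-division of $H$, hence bipartite with $S_0$ on one side. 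The idea is to place $v^*$ at $x$, route $d_{G^*}(x)$ of the $M_0$-edges at $v^*$ through $x$'s neighbors inside $V_1$, and further divide $v^*$ into degree-one pieces to absorb the remaining $k(H)-d_{G^*}(x)$ edges into independent $P_2$'s taken from a maximum matching of $G^*-N[x]$; the refined graph remains in $\mathcal M_2(H(t))$ thanks to Lemmas \ref{f} and \ref{2.3}. Place the rest of $S_0$ into $V_0$ and the non-covering vertices $V(M_0)\setminus S_0$ into fresh vertices of $V_1'\cup V_2'$; conditions (i)--(iii) plus a pigeonhole argument on the common neighborhood in $V_2'$ (needed because $V_1''$ enjoys only a $c_0$-fraction join to $V_2'$) provide the remaining $M_0$-edges. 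The same $(M_0\cup Y_1)\vee Y_2$ construction as before then yields $H(t)\subseteq G$, contradiction.

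Summing the bounds gives the inequality of Theorem \ref{m}. For equality every step must be tight: $|V_1|=|V_2|$ and $G[V_1\cup V_2]=T_2(n-\tilde q(H)+1)$; $V_0$ completely joined to $V_1\cup V_2$; $G[V_0]$ an extremal $\tilde{\mathcal B}(H)$-free graph; and by the equality clause of Lemma \ref{k} the inside graph $G^*=K_{k(H)-1,k(H)-1}$ packed inside a single class of $T_2$. This is precisely a member of $\mathcal F(n,\tilde q(H),k(H)-1,\tilde{\mathcal B}(H))$. I expect the main obstacle to be the second estimate: coordinating the vertex-division characterization of $\mathcal M_2(H(t))$, the matching in $G^*-N[x]$, and the only-partial join furnished by (iii) so that a clean copy of some refined $M_0\in\mathcal M_2(H(t))$ can be threaded through $G$ --- especially when $x\in V_1''$ and some pieces of $M_0$ must avoid non-neighbors in $V_2'$.
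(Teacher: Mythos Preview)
Your edge-decomposition and the bound on $e(G[V_0])$ match the paper's argument. The gap is in the second estimate, precisely at the step you flagged as the ``main obstacle.''

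You want to apply Lemma \ref{k} to $G^*=G[V_1]\cup G[V_2]$ by showing $d_{G^*}(x)+\alpha'(G^*-N_{G^*}[x])\le k(H)-1$ for every $x$. But for $x\in V_1$ the set $N_{G^*}[x]$ lies entirely in $V_1$, so $\alpha'(G^*-N_{G^*}[x])$ picks up \emph{all} matching edges of $G[V_2]$, including ones with neither endpoint adjacent to $x$ in $G$. Your construction places a degree-one split piece of $v^*$ at such an edge $yz\subseteq V_2''$; then that piece sits in $V_2$, while $x$ and its star sit in $V_1$. To finish you need $Y_2$ completely joined to all of $M_0'\cup Y_1$, but condition (iii) forces $Y_2$ to live inside a single $V_j'$, and vertices of $V_2''$ have no neighbours in $V_2'$ while vertices of $V_1''$ have no neighbours in $V_1'$. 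There is no choice of side for $Y_2$ that is fully joined to pieces on both sides, so the embedding cannot be completed and the contrapositive fails. (Also, Lemmas \ref{f} and \ref{2.3} do not assert that further division of an $M_0\in\mathcal M_2(H(t))$ stays in $\mathcal M_2(H(t))$; Lemma \ref{f} is only the inclusion $\mathcal M_2(H(t))\subseteq\mathcal D(H)$.)

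The paper does not prove the hypothesis of Lemma \ref{k} for $G^*$ directly. Instead it first establishes the one-sided refinement $d_{G_i}(x)+\alpha'(G_i-N_{G_i}[x])+\alpha'(G[E_{3-i}(x)])\le k(H)-1$ (only matching edges on the other side that actually touch $N_G(x)$ count), then uses a separate matching bound $\alpha'(G_1)+\alpha'(G_2)\le k(H)-1$ centred at a vertex of $V_1'$, and finally a counting argument (via Lemma \ref{c}) to force $e(G_1)\cdot e(G_2)=0$. Only after one side is empty does Lemma \ref{k} apply cleanly to the remaining $G_i$. Your sketch is missing this ``one side is empty'' reduction, and without it the appeal to Lemma \ref{k} is unjustified.
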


\begin{proof}
	Let $M$ be a graph in $\mathcal{M}_2(H(t))$ with an  independent set $S\in \tilde{\mathcal{S}}(H)$ and min$\{d_M(x):x\in S\}=k(H)$. $\tilde{\mathcal{B}}(H)$ is the subgraph covering family of $ \mathcal{M}_2(H(t))$.
	
	If $G[V_0]$ contains a graph in $\tilde{\mathcal{B}}(H)$,  there exists an $M\subseteq G[V_0\cup V'_1]$  such that $M\in \mathcal{M}_2(H(t))$ holds. Then we have  $H(t)\subseteq G[V_0\cup V'_1\cup V'_2 ]\subseteq G$, which is a contradiction. Thus, $e(G[V_0])\leq {\rm {ex}}(\tilde{q}(H)-1,\,\tilde{\mathcal{B}}(H))$.

	If $S$ contains a vertex with degree one in $M$,  by Lemma \ref{3}, we know that  $\mathcal{M}_2(H(t))$ contains a graph $M'$ with an isolated edge, say $uv$, and $\gamma(M')=\tilde{q}(H)$, $\gamma(M'-uv)=\tilde{q}(H)-1$. And in this case, we have $k(H)=1$. If there is an edge in $G[V''_1]$, noting that  $\lvert V_0\rvert =\tilde{q}(H)-1$ and each vertex of $V_0$ is adjacent to each vertex of $V'_1$, then $G[V_0\cup V_1]$ contains a copy of $M'$. The number of vertices in $V'_2$ adjacent to  $V''_1$ is sufficiently large.
	Then we have a copy of $H(t)\subseteq G[V_0\cup V_1\cup V'_2]\subseteq G$ which is a contradiction. Therefore there is no edge in $G[V_1]$. Similarly, there is no edge in $G[V_2]$. Then $e(G)\leq f(n,\,\tilde{q}(H))+{\rm {ex}}(\tilde{q}(H)-1,\,\tilde{\mathcal{B}}(H))$ holds. Together with Lemma \ref{l}, $e(G)= f(n,\,\tilde{q}(H))+{\rm {ex}}(\tilde{q}(H)-1,\,\tilde{\mathcal{B}}(H))$ holds and $G$ is in  $\mathcal{F}(n,\,\tilde{q}(H),\,0,\,\tilde{\mathcal{B}}(H))$.
	
	Now suppose that each vertex of $S$ has degree at least 2 in $M$, then by Lemma \ref{2.4}, we have $\lvert A\rvert =\gamma(H)=\tilde{q}(H)$ and there is a vertex in $A$ with degree $k(H)$.
	Noting that $\lvert V_0\rvert =\tilde{q}(H)-1$ holds and each edge between $V_0$ and $V'_1$ or $V'_2$ can be expanded into an odd cycle by using vertices of $V'_1$ and $V'_2$. If there is a copy of $S_{k(H)}(t)\subseteq G[V_1\cup V_2]$ and $V'_1$, $V'_2$ contain the neighbors of the center vertex of $S_{k(H)}$, there is a copy of $H(t)$ in $G$. 
Therefore, we may suppose $G[V_1\cup V_2]$ contains  no such   $S_{k(H)}(t)$. 
	
	Let $V'''_1\subseteq V''_1$, $V'''_2\subseteq V''_2$ be the vertex sets, which are not isolated vertices in $G[V''_1]$ and $G[V''_2]$ respectively. In the following part of the proof, denote by $G_1=G[V'''_1]$, $G_2=G[V'''_2]$, $G'=G_1\cup G_2$, $m=e(G')$. For a vertex $x\in V(G_i)$, denote by $E_{3-i}(x)=\{e\in E(G_{3-i})\,\vert V(e)\cap N_{G}(x)\neq \emptyset  \}$.
	
	If $e(G)<f(n,\,\tilde{q}(H))+{\rm {ex}}(\tilde{q}(H)-1,\,\tilde{\mathcal{B}}(H))+(k(H)-1)^2$ holds, the conclusion follows. Now suppose 
	\begin{equation}
		\begin{split}
			e(G)\geq& f(n,\,\tilde{q}(H))+{\rm {ex}}(\tilde{q}(H)-1,\,\tilde{\mathcal{B}}(H))+(k(H)-1)^2\\
			=&e(T_2(n-\tilde{q}(H)+1))+(\tilde{q}(H)-1)(n-\tilde{q}(H)+1)+{\rm {ex}}(\tilde{q}(H)-1,\,\tilde{\mathcal{B}}(H))+(k(H)-1)^2.
		\end{split}
	\end{equation}
	We  have the following claims.
	
	{\textbf{Claim 1.}} For every vertex $x\in V(G_i)$, we have  $d_{G_i}(x)+\alpha'(G_i-N_{G_i}[x])+\alpha'(G[E_{3-i}(x)]) \leq k(H)-1$ ($i=1,\,2$).

	Suppose to the contrary that there exists some $x\in V(G_i)$ such that $d_{G_i}(x)+\alpha'(G_i-N_{G_i}[x])+ \alpha'(G[E_{3-i}(x)])\geq k(H)$ holds. Without loss of generality, we may suppose $x\in V(G_1)$. Let $x_1,\,x_2,\cdots, x_s$ ($0\leq s\leq k(H)$) be $s$ neighbors of $x$ in $V''_1$; $y_{s+1}z_{s+1},\,\cdots, y_{u}z_{u}$ ($s\leq u\leq k(H)$) be a matching in $G_1-N_{G_1}[x]$; $y_{u+1}z_{u+1},\,\cdots, y_{k(H)}z_{k(H)}$ be a matching in $G[E_{2}(x)]$ where $x$ is adjacent to $y_{u+1},\,\cdots ,\,y_{k(H)}$. Since the number of vertices in $V'_1\cup V'_2$ is sufficiently large and each vertex of $V''_i$  is adjacent to at least $c_0\lvert V'_{3-i}\rvert $ vertices of $V'_{3-i}$ ($i=1,\,2$), we may find $k$ odd cycles intersecting in vertex $x$. When $1\leq j\leq s$, let $C_{t_j}=xP_{t_j-2}x_jx$. $P_{t_j-2}$ is a path between $V'_1$ and $V'_2$ and the endpoints of the path are in $V'_2$.
When $s+1\leq j\leq u$, let $C_{t_j}=xP_{t_j-4}z_jy_jw_jx$,  $P_{t_j-4}$ is a path between $V'_1$ and $V'_2$ and the endpoints of the path are in $V'_2$, $w_j$ is the vertex in $V'_2$.
When $u+1\leq j\leq k(H)$, let $C_{t_j}=xP_{t_j-3}z_jy_jx$,
 $P_{t_j-3}$ is a path between $V'_1$ and $V'_2$ and one of the endpoints of the path is in $V'_1$, the other endpoint of the path is in $V'_2$. For any $j\in [1,k(H)]$, $t_j\geq t$ is odd, the vertices of the paths in $C_{t_j}$ are different, $w_j$ is not in any paths in these cycles. Then there is a copy of $S_{k(H)}(t)\subseteq G[V_1\cup V_2]$ and $V'_1$ or $V'_2$ contains the neighbors of the center vertex of $S_{k(H)}$. This implies that there is a copy of $H(t)\subseteq G$,  a contradiction.

	{\textbf{Claim 2.}} 
$\alpha'(G_1) +\alpha'(G_2)\leq k(H)-1$.
	
		Suppose to the contrary that $\alpha'(G_1) +\alpha'(G_2)\geq k(H)$. Let $\{x_1y_1,\,x_2y_2,\cdots,\,x_sy_s\}$ ($0\leq s\leq k(H)$) be a matching in $G_1$ and $\{x_{s+1}y_{s+1},\,\cdots,\,x_{k(H)}y_{k(H)}\}$  be a matching in $G_2$.  Since the number of vertices of $V'_1\cup V'_2$ is sufficiently large and each vertex of $V''_i$ ($i=1,\,2$) is adjacent to at least $c_0\lvert V'_{3-i}\rvert $ vertices of $V'_{3-i}$, we may find a vertex $v_0\in V'_1$ such that $v_0\in \cap _{i=s+1}^{k(H)} N_{V'_1}(x_i)$ holds and $k(H)$ odd cycles intersect in exactly one vertex $v_0$. When $1\leq j\leq s$, let $C_{t_j}=v_0P_{t_j-4}x_jy_jw_jv_0$, $P_{t_j-4}$ is a path between $V'_1$ and $V'_2$ and the endpoints of the path are in $V'_2$, $w_j$ is the vertex in $V'_2$.	When $s+1\leq j\leq k(H)$, let $C_{t_j}=v_0x_jy_jP_{t_j-3}v_0$, $P_{t_j-3}$ is a path between $V'_1$ and $V'_2$ and one of the endpoints of the path is in $V'_1$, the other endpoint of the path is in $V'_2$. For any $j\in [1,k(H)]$, $t_j\geq t$ is odd, the vertices of the paths in $C_{t_j}$ are different, $w_j$ is not in any paths in these cycles. Then there is a copy of $S_{k(H)}(t)\subseteq G[V_1\cup V_2]$ and $V'_1$ or $V'_2$ contains the neighbors of the center vertex of $S_{k(H)}$.
		This implies that there is a copy of $H(t)\subseteq G$, which is a contradiction. The result follows.
	

	{\textbf{Claim 3.}} max $\{\Delta(G_1),\, \Delta(G_2)\}=k(H)-1$.
	
		By Claim 1 we have $\Delta(G_i)\leq k(H)-1$ ($i=1,\,2$). If max $\{\Delta(G_1),\, \Delta(G_2)\}\leq k(H)-2$, then 
		\begin{equation*}
			\begin{split}
				m=&e(G_1)+e(G_2)\\
				\leq& \varphi (\alpha'(G_1),\,k(H)-2)+\varphi(\alpha'(G_2),\,k(H)-2)\\
				\leq&\varphi(\alpha'(G_1)+\alpha'(G_2),\,k(H)-2)\\
				\leq &\varphi(k(H)-1,\,k(H)-2).
			\end{split}
		\end{equation*}
		By the construction of $G$, we deduce  $e(G)\leq f(n,\,\tilde{q}(H))+{\rm {ex}}(\tilde{q}(H)-1,\,\tilde{\mathcal{B}}(H)) +m$. Combining with (3.1), we have $m\geq (k(H)-1)^2$.
		If $k(H)=2$, then we have $m\leq \varphi (1,0)=0$, a contradiction. If $k(H)$ is odd, then we have $m\leq \varphi (k(H)-1,k(H)-2)<(k-1)^2$, a contradiction. If $k(H)$ is even and $k(H)\neq 4$, we have $m\leq \varphi(k(H)-1,\,k(H)-2)<(k(H)-1)^2$, a contradiction.
		
		If $k(H)=4$ holds, then we have $m\leq \varphi(3,\,2)=(k(H)-1)^2=9$. By (3.1), we have $m=(k(H)-1)^2= 9$. Then $G'$ is a graph with $e(G')=9$, $\Delta (G')=2$ and $\alpha'(G')=3$. Since $\Delta (G')=2$ and $G'$ has no isolated vertex, $\nu (G') \geq e(G')=9$, the equality holds if and only if $G'$ is 2-regular, and $c(G')\leq \alpha'(G')= 3$. On the other hand by Lemma \ref{j}, we obtain
		\begin{equation*}
			3=\alpha'(G')\geq \frac{\nu (G') -c(G')}{2}\geq \frac{\nu (G') -3}{2}.
		\end{equation*} 
		Hence $\nu (G')\leq 9$ and then  $\nu (G')=9$ holds. Therefore $G'$ consists of three vertex-disjoint triangles. As $m=(k(H)-1)^2$, then we have $e(G)=f(n,\,\tilde{q}(H))+{\rm {ex}}(\tilde{q}(H)-1,\,\tilde{\mathcal{B}}(H))+(k(H)-1)^2$. Therefore, each vertex of $V''_1$ is adjacent to each vertex of $V''_2$. Then for any vertex $x\in V(G')$, we have $d_{G_i}(x)+\alpha'(G_i-N_{G_i}[x])+\alpha'(G[E_{3-i}(x)]) =4=k(H)$ ($i=1,\;2$), a contradiction to Claim 1. Therefore, we have max $\{\Delta(G_1),\,\Delta(G_2)\}=k(H)-1.$

		{\textbf{Claim 4.}} 
		$e(G_1)\cdot e(G_2)=0$.
		
		First we have 
			\begin{equation*}
				\begin{split}
			m=&e(G_1)+e(G_2)\\
			\leq& \varphi(\alpha'(G_1),\,k(H)-1)+\varphi(\alpha'(G_2),\,k(H)-1)\\
			\leq &\varphi(\alpha'(G_1)+\alpha'(G_2),\,k(H)-1)\\
			\leq& \varphi(k(H)-1,\,k(H)-1)\\
			\leq& k(H)(k(H)-1).
			\end{split}
			\end{equation*}
		From Claim 3 we may suppose $\Delta(G_1)=k(H)-1$, and $x$ is in $V'''_1$ with $d_{G_1}(x)=k(H)-1$.  If $e(G_2)\geq 1$, then $\alpha'(G_2)\geq 1$. By Claim 2, $\alpha'(G_1)\leq k(H)-1-\alpha'(G_2)\leq k(H)-2$. By Claim 1, we obtain $\alpha'(G[E_2(x)])=0$ which implies $V'''_2\cap N_G(x)=\emptyset$. Hence, for every $v\in V'''_2$, $v$ is not adjacent to $x$.
		Let $n'=n-\tilde{q}(H)+1$. So $$e(V_1,\,V_2)\leq \lvert V_1\rvert \lvert V_2\rvert -\lvert V'''_2\rvert \leq e(T_2(n'))-\lvert V'''_2\rvert.$$ Thus we have 
		\begin{equation*}
			e(T_2(n'))+(k(H)-1)^2\leq e(G[V_1\cup V_2])\leq e(T_2(n'))-\lvert V'''_2\rvert+m.
		\end{equation*}
	Therefore, $\lvert V'''_2\rvert\leq m-(k(H)-1)^2$, and 
	\begin{equation*}
		\begin{split}
			m\leq &\varphi(\alpha'(G_1),\,\Delta (G_1))+\varphi(\alpha'(G_2),\,\Delta (G_2))\\
			\leq &\alpha'(G_1)(\Delta (G_1) +1)+\alpha'(G_2)(\Delta (G_2) +1)\\
			\leq& k(H)\alpha'(G_1)+(k(H)-1-\alpha'(G_1))\lvert V'''_2\rvert \\
			=&\alpha'(G_1)(k(H)-\lvert V'''_2\rvert )+(k(H)-1)\lvert V'''_2\rvert \\
			\leq &(k(H)-2)(k(H)-\lvert V'''_2\rvert)+(k(H)-1)\lvert V'''_2\rvert\\
			=&(k(H)-1)^2+\lvert V'''_2\rvert-1\\
			\leq& (k(H)-1)^2+m-(k(H)-1)^2-1\\
			=&m-1.
		\end{split}
	\end{equation*}
This contradiction shows $e(G_2)=0$. Therefore, we have proved $e(G_1)\cdot e(G_2)=0$.

	By Claim 1 and Claim 4,  for any vertex $x\in V_i$ we know that $d_{G_i}(x)+\alpha'(G_i-N_{G_i}[x])+\alpha'(G[E_{3-i}(x)]) \leq k(H)-1$ and $e(G_1)\cdot e(G_2)=0$ hold.  Then we have $d_{G'}(x)+\alpha'(G'-N_{G'}[x])\leq k(H)-1$. Applying Lemma \ref{k}, we deduce $e(G')\leq (k(H)-1)^2$. The equality holds if and only if $G'=K_{k(H)-1,\,k(H)-1}$. Therefore, we have $e(G)= f(n,\,\tilde{q}(H))+{\rm {ex}}(\tilde{q}(H)-1,\,\tilde{\mathcal{B}}(H))+(k(H)-1)^2$ and  $G\in \mathcal{F}(n,\,\tilde{q}(H),\,k(H)-1,\,\tilde{\mathcal{B}}(H))$. The proof is complete.
\end{proof}

~\

We mainly use the so-called progressive induction to prove the upper bound of Theorem \ref{d} and this technique is borrowed from \cite{ref22}.

\begin{theorem}[{Simonovites~\cite[]{ref68}}]\label{h}
	Let $\mathfrak{U}=\bigcup_{i=1}^{\infty} \mathfrak{U}_{i}$ be a set of given elements, such that $\mathfrak{U}_{i}$ are disjoint finite subsets of  $\mathfrak{U}$. Let $P$ be a condition or property defined on $\mathfrak{U}$ which means the elements of $\mathfrak{U}$ may satisfy or not satisfy $P$. Let $\phi(x)$ be a function defined on $\mathfrak{U}$ such that $\phi(x)$ is a non-negative integer and 
	\begin{enumerate}[(i)]
		\item if $x$ satisfies $P$, then $\phi(x)=0$;
		\item there is an $n_0$ such that if $n>n_0$ and $x\in \mathfrak{U}_n$ then either $x$ satisfies $P$ or there exist an $n'$ and an $x'$ such that 
		\begin{equation*}
			\frac{n}{2}<n'<n,\,x'\in \mathfrak{U}_{n'}\;\,and\;\, \phi(x)<\phi(x').
		\end{equation*}
		Then there exists an $n_0$ such that  if $n>n_0$, every $x\in \mathfrak{U}_n$ satisfies $P$.
	\end{enumerate}
\end{theorem}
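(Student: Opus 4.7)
The plan is a proof by contradiction using the chain produced by iterating condition (ii). Assume the conclusion fails, so for arbitrarily large $n$ there exists some $x\in\mathfrak{U}_n$ not satisfying $P$. Fix the $n_0$ supplied by (ii) and pick such an offender $x_1\in\mathfrak{U}_{n_1}$ with $n_1$ very large. From $x_1$ I would apply (ii) repeatedly to build a chain $x_1,x_2,\ldots$ with $x_i\in\mathfrak{U}_{n_i}$, $n_i/2<n_{i+1}<n_i$, and $\phi(x_{i+1})>\phi(x_i)$. Each $x_i$ in the chain must fail $P$: for $i=1$ by assumption, and for $i\geq 2$ because $\phi(x_i)>\phi(x_{i-1})\geq 0$ forces $\phi(x_i)\geq 1$, so property (i) rules out $x_i$ satisfying $P$. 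Hence (ii) stays applicable as long as $n_i>n_0$, and the chain is well-defined at least up to the first index $k$ with $n_k\leq n_0$.

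The contradiction then comes from two competing estimates on the length $k$. Unfolding $n_{i+1}>n_i/2$ yields $n_1<2^{k-1}n_k\leq 2^{k-1}n_0$, so $k>1+\log_2(n_1/n_0)$, which is unbounded as $n_1\to\infty$. On the other hand, the terminal element $x_k$ lies in the finite set $\bigcup_{i\leq n_0}\mathfrak{U}_i$ (a finite union of finite sets), so $\phi(x_k)\leq M:=\max\{\phi(y):y\in\bigcup_{i\leq n_0}\mathfrak{U}_i\}$, a fixed non-negative integer. Since the integers $\phi(x_1)<\phi(x_2)<\cdots<\phi(x_k)$ are strictly increasing and non-negative, one gets $k\leq\phi(x_k)+1\leq M+1$. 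Combining the two bounds forces $n_1<n_0\cdot 2^M$, a fixed constant, contradicting the assumption that $n_1$ could be chosen arbitrarily large.

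The main obstacle is purely bookkeeping: condition (ii) is only available while $n_i>n_0$, so one must verify that the chain actually terminates, that termination occurs at the first index $k$ with $n_k\leq n_0$, and that at that moment $x_k$ really sits in the finite base set on which $M$ is defined. Once this is set up the argument closes automatically, driven by the interplay between the logarithmic lower bound on $k$ coming from the descent $n_{i+1}>n_i/2$ and the constant upper bound on $k$ coming from the integrality of the non-negative function $\phi$. This is the essence of progressive induction, and no structural input from the graph-theoretic setting is required here.
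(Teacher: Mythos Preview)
The paper does not provide its own proof of this theorem; it is quoted from Simonovits~\cite{ref68} and then used as a black box in the proof of Theorem~\ref{d}. So there is nothing in the paper to compare your argument against.

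That said, your proof is correct and is the standard way to establish progressive induction. The only point worth tightening is the termination of the chain: since the $n_i$ are strictly decreasing positive integers, the process must halt after finitely many steps, and the first index $k$ with $n_k\le n_0$ exists; at that step $x_k\in\bigcup_{i\le n_0}\mathfrak{U}_i$, which is finite, so $M$ is well-defined. From there your two competing bounds $k>1+\log_2(n_1/n_0)$ and $k\le M+1$ force $n_1<n_0\cdot 2^{M}$, contradicting the choice of $n_1$ arbitrarily large. The argument is self-contained and needs no graph-theoretic input, exactly as you note.
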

\
\newline
\textbf{Proof of Theorem \ref{d} } 
Let $G_n$ be an extremal graph for $H(t)$ of order $n$, $F_n\in \mathcal{F}(n,\,\tilde{q}(H),\,k(H)-1,\,\tilde{\mathcal{B}}(H))$, then $e(F_n)=f(n,\,\tilde{q}(H))+{\rm {ex}}(\tilde{q}(H)-1,\,\tilde{\mathcal{B}}(H))+(k(H)-1)^2$. Let $\mathfrak{U}_n$ be the set of extremal graphs for $H(t)$ of order  $n$, $P$ be the property defined on $\mathfrak{U}$ satisfying that $e(G_n)\leq  e(F_n)$ and the equality holds if and only if $G_n\in \mathcal{F}(n,\,\tilde{q}(H),\,k(H)-1,\,\tilde{\mathcal{B}}(H))$. Define  $\phi (G_n)=max\{e(G_n)- e(F_n),\;0\}$. If $G_n$ satisfies $P$, then $\phi (G_n)=0$, which implies the condition (i) in Theorem \ref{h} is satisfied. 
 
 In the following part we may prove either $G_n$ satisfies $P$ or there exist an $n'$ and a $G_{n'}$ such that
\begin{equation*}
	\frac{n}{2}<n'<n,\;G_{n'}\in \mathfrak{U}_{n'}\;\,and \;\,\phi(G_{n})<\phi (G_{n'}).
\end{equation*}

By Theorem 1.2 and the fact $e(G_n)\geq f(n,\,\tilde{q}(H))+{\rm {ex}}(\tilde{q}(H)-1,\,\tilde{\mathcal{B}}(H))\geq \frac{n^2}{4}$, there is an $n_1$ such that if $n>n_1$, $G_n$ contains $T_{2}(2n_2)$ ($n_2$ is sufficiently large) as a subgraph. By Lemma \ref{2.3}, we have $e(H)P_2\in \mathcal{M}_2(H(t))$. In $G_n$ each class of $T_2(2n_2)$ contains no copy of $e(H)P_2$. Otherwise it follows from the definition of the 2-decomposition family that $G_n$ contains a copy of $H(t)$, a contradiction. Hence, there is an induced subgraph $T_2(2n_3)$ ($n_3$ is also sufficiently large) of $G_n$ by deleting $2e(H)$ vertices of each class of $T_2(2n_2)$.

Let $c$ be a sufficiently small constant and $T_0=T_2(2n_3)$, $X=V(G_n)\backslash V(T_0)$. We  pick vertices $x_t\in X$ and graphs $T_t$ recursively: $x_t$ is the vertex which has at least $c^tn_3$ neighbors in each class of $T_{t-1}$, and $T_t=T_2(2c^tn_3)$ is the subgraph of $T_{t-1}$ induced by the neighbors of $x_t$. $B^t_1$ and $B^t_2$ are the vertex sets of  two classes of $T_t$. The progress stops after at most $\tilde{q}(H)-1$ steps. If $t\geq \tilde{q}(H)$,  $G_n[\{x_1,\,x_2,\cdots ,x_{\tilde{q}(H)}\}\cup B^{\tilde{q}(H)}_1]$  contains a copy of $M$, then $G_n$  contains a copy of $H(t)$. Therefore, we may suppose the progress ends at $x_s$ and $T_s$ where $s\leq \tilde{q}(H)-1$. Denote by  $Y=\{x_1,\,\cdots, x_s\}$.

Next we divide $V\left(G_n\right)\backslash \left(V\left(T_s\right)\cup Y\right)$. If $x\in V\left(G_n\right)\backslash \left(V\left(T_s\right)\cup Y\right)$ is adjacent to less than $c^{s+1}n_3$ vertices of $B^s_i$ and is adjacent to at least $(1-\sqrt{c})c^sn_3$ vertices of $B^s_{3-i}$, then we put $x$ in $C_i$ ($i=1,\,2$). If $x\in V\left(G_n\right)\backslash \left(V\left(T_s\right)\cup Y\right)$ is adjacent to less than $c^{s+1}n_3$ vertices of $B^s_i$ and is adjacent to less than $(1-\sqrt{c})c^sn_3$ vertices of $B^s_{3-i}$ for some $i\in \{1,2\}$, then we put $x$ in $D$. Then $V(G_n)\backslash \left(V(T_s)\cup Y\right)=C_1\cup C_2\cup D$ holds. 

The number of independent edges in $G_n[B^s_i\cup C_i]$ is less than $e(H)$.  Otherwise, if $e(H)P_2\subseteq G_n[B^s_i\cup C_i]$, $G_n$ contains a copy of $H(t)$.
 Consider the edges joining $B^s_{i}$ and $C_i$ and select a maximal set of independent edges, say $y_1z_1,\,\cdots,y_mz_m$ with $y_j\in B^s_i$, $z_j\in C_i$ and $1\leq j\leq m$, $1\leq m< \ell $, where $\ell =e(H)$. The number of vertices of $B^s_i$ joining to at least one of $z_1,\,z_2,\cdots,z_m$
is less than $c^{s+1}\ell n_3$ and the remaining vertices of $B^s_i$ are not adjacent to any vertex of $C_i$. Therefore there are at least $(1-c\ell )c^{s}n_3$ vertices of $B^{s}_i$ which are not adjacent to any vertices of $C_i$. We may move these $c^{s+1}\ell n_3$ vertices of $B^s_i$ to $C_i$ to obtain $B'_i$ and $C'_i$ such that  $B'_i\subseteq B^s_i$, $C_i\subseteq C'_i$ and there are no edges between $B'_i$ and $C'_i$.

In conclusion, the vertices of $G_n$ can be partitioned into $V(T'_{s})$, $C'_1$, $C'_2$, $D$ and $Y$, where  $T'_{s}=T_2(2n_4)$ with classes $B'_1$ and $B'_2$, $n_4=c^sn_3-c^{s+1}\ell n_3$.
\begin{enumerate}[(i)]
	\item $\lvert Y\rvert =s$ and each $v\in Y$ is adjacent to each vertex of $T_2(2n_4)$.
	\item Each vertex of  $ C'_i$ is adjacent to at least $(1-\sqrt{c}-c\ell )c^sn_3$ vertices of $B'_{3-i}$ and is not  adjacent to any vertex of $B'_i$ ($i=1,\,2$).
	\item Each vertex of  $ D$ is adjacent to less than $c^{s+1}n_3$ vertices of $B'_i$ and is adjacent to less than $(1-\sqrt{c})c^sn_3$ vertices of $B'_{3-i}$ for some $i\in \{1,2\}$.
\end{enumerate}

Denote by $\widehat{G}=G_n-V(T'_{s})$. Since $\widehat{G}$ does not contain a copy of $H(t)$, we have $e(\widehat{G})\leq e(G_{n-2n_4})$.
There is a $T'_{s}$ contained in $F_n$.  Denote by $\widehat{F}=F_n-V(T'_{s})$. Then
\begin{equation*}
	\begin{split}
	e(G_n)-e(F_n)=&e(T_{s}')+e\left(V(\widehat{G}),\,V(T'_{s})\right)+e(\widehat{G})-\left[e(T_{s}')+e\left(V(\widehat{F}),\,V(T'_{s})\right)+e(\widehat{F})\right]\\
	=&e(\widehat{G})-e(\widehat{F})+e\left(V(\widehat{G}),\,V(T'_{s})\right)-e\left(V(\widehat{F}),\,V(T'_{s})\right)\\
	\leq &e(G_{n-2n_4})-e(F_{n-2n_4})+e\left(V(\widehat{G}),\,V(T'_{s})\right)-e\left(V(\widehat{F}),\,V(T'_{s})\right).
	\end{split}
\end{equation*}
Then we have $\phi (G_n)\leq \phi (G_{n-2n_4}) +e\left(V(\widehat{G}),\,V(T'_{s})\right)-e\left(V(\widehat{F}),\,V(T'_{s})\right)$.

On the other hand 
\begin{equation*}
	\begin{split}
	&e\left(V(\widehat{G}),\,V(T'_{s})\right)-e\left(V(\widehat{F}),\,V(T'_{s})\right)\\
	\leq &2sn_4+(n-s-2n_4-\lvert D\rvert)n_4+\lvert D\rvert [c^{s+1}n_3+(1-\sqrt{c})c^sn_3]\\
	&-[(\tilde{q}(H)-1)2n_4+n_4(n-\tilde{q}(H)+1-2n_4)]\\
	=& [2s+n-s-\lvert D\rvert -2(\tilde{q}(H)-1)-(n-\tilde{q}(H)+1)]n_4\\
	&+[\lvert D\rvert c^{s+1}+\lvert D\rvert (1-\sqrt{c}) c^{s}]n_3\\
	=& (s-\tilde{q}(H)+1)n_4+(c(\ell +1)-\sqrt{c})c^sn_3\lvert D\rvert\\
	\leq& 0.
	\end{split}
\end{equation*}
	
	If $e\left(V(\widehat{G}),\,V(T'_{s})\right)-e\left(V(\widehat{F}),\,V(T'_{s})\right) <0$, then $\phi (G_n)<\phi (G_{n-2n_4})$ holds. Since $n-2n_4 >\frac{n}{2}$, the condition (ii) in Theorem \ref{h} is satisfied. 
	
	If $e\left(V(\widehat{G}),\,V(T'_{s})\right)-e\left(V(\widehat{F}),\,V(T'_{s})\right) =0$, then $s=\tilde{q}(H)-1$, $\lvert D\rvert =0$, each vertex of $C'_i$ is adjacent to at least $(1-\sqrt{c}-c\ell )c^sn_3$ (not less than $c_0n_4$, $c_0\in (\frac{1}{2},1]$ is a constant) vertices of $B'_{3-i}$ ($i=1,\,2$). By Theorem \ref{m}, we have  $e(G_n)\leq  f(n,\,\tilde{q}(H))+{\rm {ex}}(\tilde{q}(H)-1,\,\tilde{\mathcal{B}}(H)) +(k(H)-1)^2$, and the equality holds if and only if $G_n\in \mathcal{F}(n,\,\tilde{q}(H),\,k(H)-1,\,\tilde{\mathcal{B}}(H))$. 
	
	Since $n$ is sufficiently large, there exists an $n_0$ such that $n>n_0$. Hence  ${\rm {ex}}(n,\,H(t))\leq  f(n,\,\tilde{q}(H))+{\rm {ex}}(\tilde{q}(H)-1,\,\tilde{\mathcal{B}}(H)) +(k(H)-1)^2$, and the equality holds if and only if the extremal graphs for $H(t)$ of order $n$ are in $\mathcal{F}(n,\,\tilde{q
	}(H),\,k(H)-1,\,\tilde{\mathcal{B}}(H))$.
	The proof is complete.

	~\\
	{\textbf{Remark}}
	
	This paper determines the range of Tur\'{a}n numbers for odd-ballooning of general bipartite graphs obtained from replacing each edge by an odd cycle of order $t$ where $t\geq 5$ is an odd integer. Given an integer $p$,
	the edge blow-up of a graph $H$, denoted by $H^{p+1}$, is the graph obtained from replacing each edge in $H$ by a clique of order $p+1$, and the new vertices of the cliques are all distinct. Yuan in \cite{ref22} determined the range of Tur\'{a}n numbers for edge blow-up of all bipartite graphs when $p\geq 3$.  The Tur\'{a}n numbers for $H(3)$ has been determined when $H$ is a star, a path or an even cycle. While the Tur\'{a}n numbers for $H(3)$ when $H$ is a general bipartite graph are unclear.
	

~\\
{\textbf{Declaration}}

The authors have declared that no competing interest exists.

\end{document}